\newcommand{\bfm}[1]{\mbox{\boldmath{$#1$}}}
\newcommand{\beq}{\begin{eqnarray}}
\newcommand{\eeq}{\end{eqnarray}}
\newcommand{\beqs}{\begin{eqnarray*}}
\newcommand{\eeqs}{\end{eqnarray*}}
\newtheorem{theorem}{Theorem}
\newtheorem{corollary}{Corollary}
\newtheorem{definition}{Definition}
\newtheorem{lemma}{Lemma}
\begin{document}

\title{Relative Equilibria in the Spherical, Finite Density 3-Body Problem}
\author{D.J. Scheeres\\Department of Aerospace Engineering Sciences\\The University of Colorado at Boulder\\scheeres@colorado.edu}
\date{\today}
\maketitle

\begin{abstract}

The relative equilibria for the spherical, finite density 3 body problem are identified. Specifically, there are 28 distinct relative equilibria in this problem which include the classical 5 relative equilibria for the point-mass 3-body problem. None of the identified relative equilibria exist or are stable over all values of angular momentum. The stability and bifurcation pathways of these relative equilibria are mapped out as the angular momentum of the system is increased. This is done under the assumption that they have equal and constant densities and that the entire system rotates about its maximum moment of inertia. The transition to finite density greatly increases the number of relative equilibria in the 3-body problem and ensures that minimum energy configurations exist for all values of angular momentum. 

\end{abstract}

\section{Introduction}

The 3-body problem is one of the most fundamental and well studied problems in Celestial Mechanics. A key result for this problem is that there exist only 5 relative equilibria, and that these exist for all levels of angular momentum \cite{euler,lagrange}. The properties of these special solutions have been deeply studied, and have motivated significant research in mechanics and dynamics. A hallmark of the classical problem is that the bodies are considered to be point masses, with no restrictions on how close they can come to each other. A recent variation of this problem has been posed that removes this one restriction \cite{scheeres_minE}, and supposes that these bodies can be rigid bodies with finite density, and hence have limits on their proximity. Such ``Full Body'' systems inherit the fundamental symmetries of the $N$-body problem \cite{scheeres_F2BP}, however they also demand that the rotational angular momentum, energy and dynamics of these rigid bodies be incorporated in the theory as well. 

This paper studies the spherical 3-body problem under the assumption that the bodies are rigid and have finite density, and thus the separation between the bodies is constrained to be positive. This one change completely alters the character of the problem and, while the traditional Euler and Lagrange solutions still exist for large enough angular momentum values, a full 23 additional relative equilibria emerge from the analysis at all values of angular momentum, with a complex and rich bifurcation scheme. 

The celestial mechanics of bodies with finite density and fixed shape can have dynamical evolution and relative equilibria that are quite distinct from that found in the classical Newtonian point mass $N$-body problem. These differences were previously explored in \cite{scheeres_minE} where several results were proven for the so-called ``Full Body Problem,'' in which the individual bodies are treated as rigid bodies with finite densities. Specifically, it was shown that, in opposition to the point mass $N$-body problem, the full body problem will always have a minimum energy configuration. Further, the number and variety of relative equilibria for that problem are greatly enhanced, and now include configurations where the bodies can rest on each other and configurations where different collections of resting bodies orbit each other, as well as the classical central configurations. One important aspect of this problem is that the existence and stability of configurations become a function of the total angular momentum of the system, a dependance that does not exist for the classical point mass $N$-body problem. 

This paper studies the relative equilibria of one particular problem in the Full Body Problem (FBP) to completeness. Specifically, all relative equilibria of the planar spherical full 3-body problem, which consists of three spheres of equal density but arbitrary size, located in the plane perpendicular to the angular momentum. 
The explicit methodology used was developed in \cite{scheeres_minE, scheeres_minE_chap}, and is fundamentally based on analysis of the amended potential as developed by Smale \cite{smaleI, smaleII} and motivated by observations from Arnold \cite{arnold1988mathematical}. The main theorem is stated and described at first, the problem is technically defined, then several results used to make the proof are listed, and finally all the detailed computations for the proof are given. Following the proof, a summary of the proof is provided, indicating how it establishes the theorem. 

A main application of this result is to identify the stable states that can be physically achieved by a collection of self-gravitating bodies that can sustain contact. This situation happens in solar system dynamics when considering the physical nature of rubble pile asteroids \cite{fujiwara_science}, although there the number of individual grains can be quite large. Recent observations of comets, however, also show that they can be comprised of a few larger components that rest on each other and, given their changing spin rates due to outgassing, also mimic a system with changing angular momentum \cite{massironi2015two}. 
A specific motivation from this current analysis would seek out natural situations that mimic the stable members of these configurations. As the existence of some of these stable configurations are somewhat unexpected, finding such configurations in nature would be especially interesting. 

Additional avenues for exploration would be to expand the current analyses to study the dynamics of finite bodies as they interact with each other gravitationally and through impact. This sort of approach has been followed in the planetary sciences community in the study of rubble pile asteroids \cite{richardson_equilibrium, sanchez_icarus}. The current results can motivate the formation of stable shapes as a function of body morphology and total angular momentum. 
In addition, the presence of finite densities and minimum energy configurations also enables the rigorous computation of energy limits for the Hill stability in the Full $N$-body problem \cite{IAU_namur, IAU_hawaii}, an avenue of further investigation for the current problem. 

\section{Main Result}

\begin{theorem} 
\label{theorem:1}
In the spherical full 3-body problem there exist a total of 28 distinct relative equilibrium configurations. No single class of relative equilibria exists or is energetically stable for all angular momentum, however at every value of angular momentum there exists at least 1 energetically stable relative equilibria. The pattern of relative equilibria can be fully represented in a bifurcation chart as the total angular momentum of the system varies from 0 to $\infty$. 
\end{theorem}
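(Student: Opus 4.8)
The plan is to characterize every relative equilibrium as a critical point of the amended potential $\mathcal{E} = K^2/(2I) + V$, where $K$ is the fixed total angular momentum, $I$ the moment of inertia about the rotation axis, and $V$ the mutual gravitational potential of the three spheres, all expressed on the constrained configuration space in which each pair of bodies must satisfy the non-overlap condition $r_{ij} \ge R_i + R_j$. Because the densities are equal, the masses are tied to the radii by $m_i \propto R_i^3$, so the problem depends only on the two independent radius ratios and the scalar $K$; fixing the rotation to occur about the maximum moment of inertia removes the remaining degrees of freedom and guarantees that the reduced $\mathcal{E}$ is the correct object whose critical points yield relative equilibria. First I would write the first-order (Karush--Kuhn--Tucker) conditions, so that a relative equilibrium is either an interior critical point of $\mathcal{E}$ --- a classical central configuration, recovering the Euler and Lagrange families --- or a boundary critical point at which one or more contact constraints are active with non-negative multipliers.

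The combinatorial heart of the argument is to stratify the boundary of configuration space by the contact graph on the three labelled bodies and solve the equilibrium equations on each stratum. The relevant strata are: no contacts (classical central configurations), exactly one active contact (a touching pair with the third body either collinear or off the line of centers), two active contacts (chain and bent arrangements), and the fully mutually touching triangle. On each stratum I would reduce the conditions to a small algebraic system in the inter-body angles and separations, enumerate its solutions, and then quotient by the residual symmetries (rigid rotation and reflection) while keeping the bodies distinguishable, since the three radii are in general distinct. Summing the solution counts across all strata is what produces the total of 28, with the 5 classical configurations arising from the empty-contact stratum and the remaining 23 from the contact strata.

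For each family I would then compute stability and the $K$-dependence together. Energetic stability is decided by the sign of the second variation of $\mathcal{E}$ restricted to the tangent cone of the active constraints --- equivalently, a bordered Hessian that accounts for both the centrifugal term $K^2/(2I)$ and the active contacts. Tracking the eigenvalues of this form as $K$ ranges over $(0,\infty)$ yields the existence intervals and the stability transitions; the qualitative transitions occur exactly where a constraint multiplier passes through zero (a contact forms or breaks) or where an eigenvalue of the second variation changes sign (a classical saddle-node or pitchfork bifurcation). Collecting these transition values of $K$ and the ordering of the branches between them assembles the bifurcation chart and shows that no single family persists or remains stable on all of $(0,\infty)$.

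Finally, I would establish the global existence claim by a compactness argument: for fixed $K$ the non-overlap constraint makes the admissible reduced configuration space compact, and $\mathcal{E}$ is continuous and proper on it, so $\mathcal{E}$ attains a minimum, which is automatically an energetically stable relative equilibrium. This is exactly the mechanism of \cite{scheeres_minE} guaranteeing a minimum-energy configuration at every angular momentum, and it secures the claim that at least one stable equilibrium exists for all $K$. The main obstacle is the exhaustiveness of the enumeration: proving both that the list of contact strata and their solution counts is complete (no configuration missed and none double-counted under the labelling and symmetry bookkeeping) and that each branch is followed correctly through the delicate interior-to-boundary transitions, where a classical central configuration loses existence precisely as a contact becomes active. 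This interplay between free and resting equilibria has no analogue in the classical point-mass theory and is where the bulk of the careful case analysis will lie.
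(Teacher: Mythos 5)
Your overall plan coincides with the paper's: relative equilibria as critical points of the amended potential on the contact-constrained configuration space, stratified by the number of active contacts (none, one, two, three), with KKT-type first-order conditions separating resting and free degrees of freedom, second-variation (Sylvester) tests for energetic stability, and the branches tracked in $H$ to assemble the bifurcation chart. The per-stratum enumeration you defer is exactly where the paper's count comes from (5 orbital; 6 Euler-aligned plus 6 isosceles with one contact; 3 Euler-resting plus 6 transitional-resting with two contacts; 2 Lagrange-resting with three contacts, totalling 28).

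The one step that fails as written is your final existence argument. The non-overlap constraints $r_{ij} \ge R_i + R_j$ bound the separations from \emph{below}, not above, so the admissible reduced configuration space is unbounded, and $\mathcal{E}$ is not proper on it: $\mathcal{E} \to 0$ when all three bodies recede from one another, and $\mathcal{E} \to \mathcal{U}_{jk}$ (a finite negative limit) when a single body escapes while the other two stay bounded. Hence ``continuous and proper on a compact space'' is not available. The mechanism that actually works --- and is what the cited lemma proves --- is that finite density bounds $\mathcal{U}$, and hence $\mathcal{E}$, from below (no collision singularity), while the escape limits of $\mathcal{E}$ are strictly above values attained at finite configurations: placing the distant body at a large but finite separation $d$ lowers $\mathcal{E}$ below the escape limit, because the attraction contribution $\sim -1/d$ dominates the centrifugal contribution $\sim 1/d^{2}$. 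The infimum is therefore attained on a compact sublevel set, and that minimizer is the energetically stable relative equilibrium guaranteed at every $H$. With that repair, your proposal matches the paper's argument.
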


The 28 different relative equilibria can be delineated in a few different ways. Figure \ref{fig:equilibria} shows these relative equilibria separated into 7 different classes, each with one to six distinct configurations. The figure shows 20 distinct configurations, with 8 of them having an alternate ordering not shown in the figure. 

\begin{figure}[h!]
\centering
\includegraphics[scale=0.4]{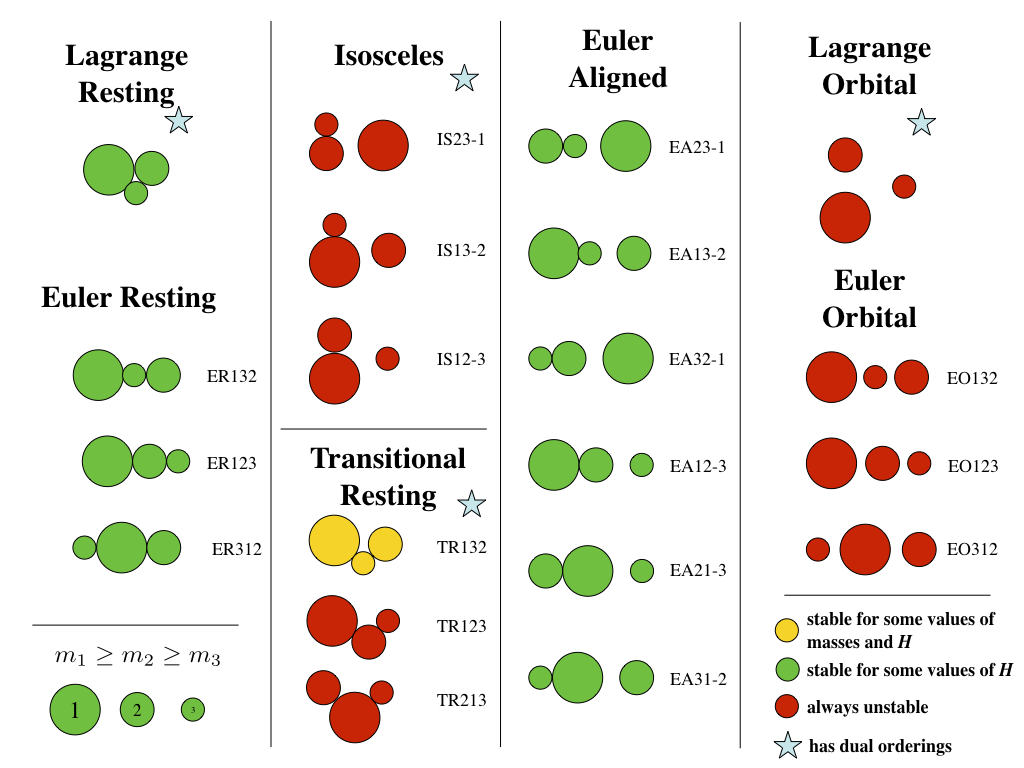}
\caption{Examples of the 20 different equilibrium configurations are shown within different classes. Colors indicate whether or not the configuration can be stable for a range of angular momentum (green), for some special combinations of mass ratio and angular momentum (yellow), or if they are unstable for all angular momentum values and mass ratios (red). Stars indicate when a reordering of the masses provides another relative equilibrium. The detailed naming convention for the configurations is also introduced. }
\label{fig:equilibria}
\end{figure}

Figures \ref{fig:bif1}-\ref{fig:bif4} show the detailed sequences of bifurcations that occur as the total angular momentum of the system is increased. These diagrams are qualitative, but the sequence of bifurcations in the specific connected pathways are accurate and will be derived in the course of the proof. Note that the sequence of bifurcations keeps some of the configurations separated from each other. In other cases the sequence of bifurcations will change as the relative masses of the bodies are changed, in these cases multiple types of sequences are shown. Note in Fig.\ \ref{fig:bif2} that this particular sequence has at least one stable configuration for all values of angular momentum while none of the others has a stable configuration in the vicinity of $H= 0$. Also note that the sequences shown in Figs.\ \ref{fig:bif1}--\ref{fig:bif3} all have 2 stable configurations for large $H$. 

\begin{figure}[h!]
\centering
\includegraphics[scale=0.35]{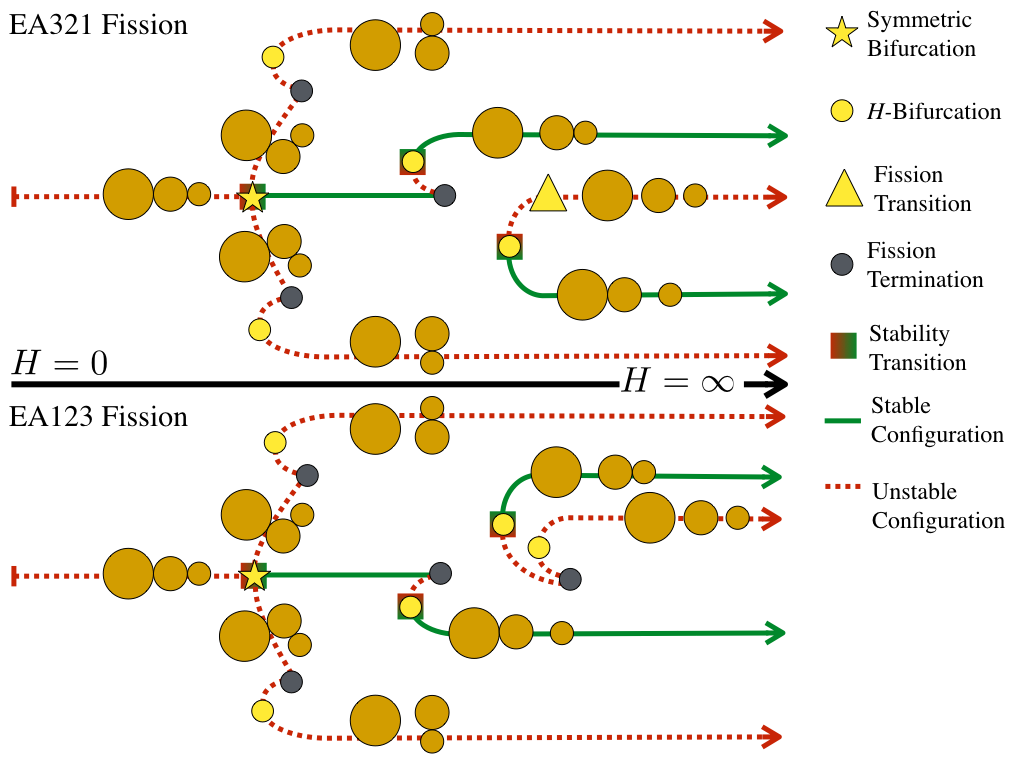}
\caption{Bifurcation diagram showing the possible branches relating to the ER123 configuration. Depending on the relative mass values two different bifurcation pathways exist. Within each pathway, the manner in which the EO configuration appears can shift between the two pathways shown. }
\label{fig:bif1}
\end{figure}

\begin{figure}[h!]
\centering
\includegraphics[scale=0.35]{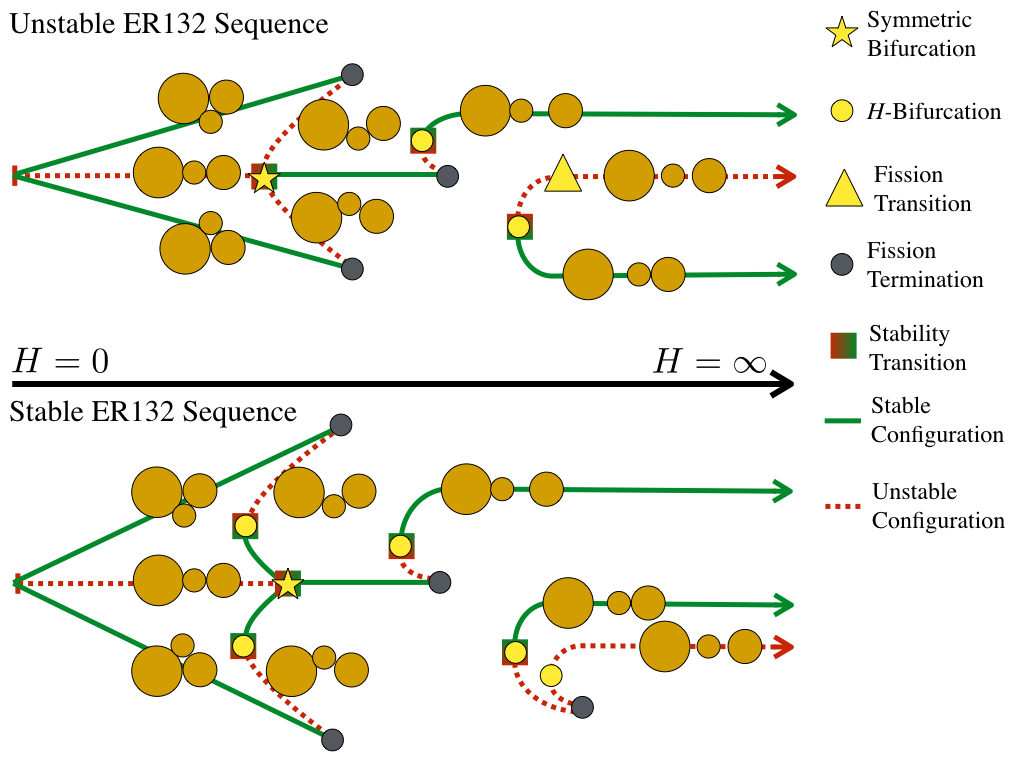}
\caption{Bifurcation diagram showing the possible branches relating to the ER132 configuration. ER132 links to the LR configuration, and always fissions into the same EA13-2 configuration. Depending on the relative mass values there can be a range of stable TR132 configurations. Within each pathway, the manner in which the EO configuration appears can also shift between the two pathways shown. The two different sequence need not have the same pattern of TR and EO bifurcations.}
\label{fig:bif2}
\end{figure}

\begin{figure}[h!]
\centering
\includegraphics[scale=0.35]{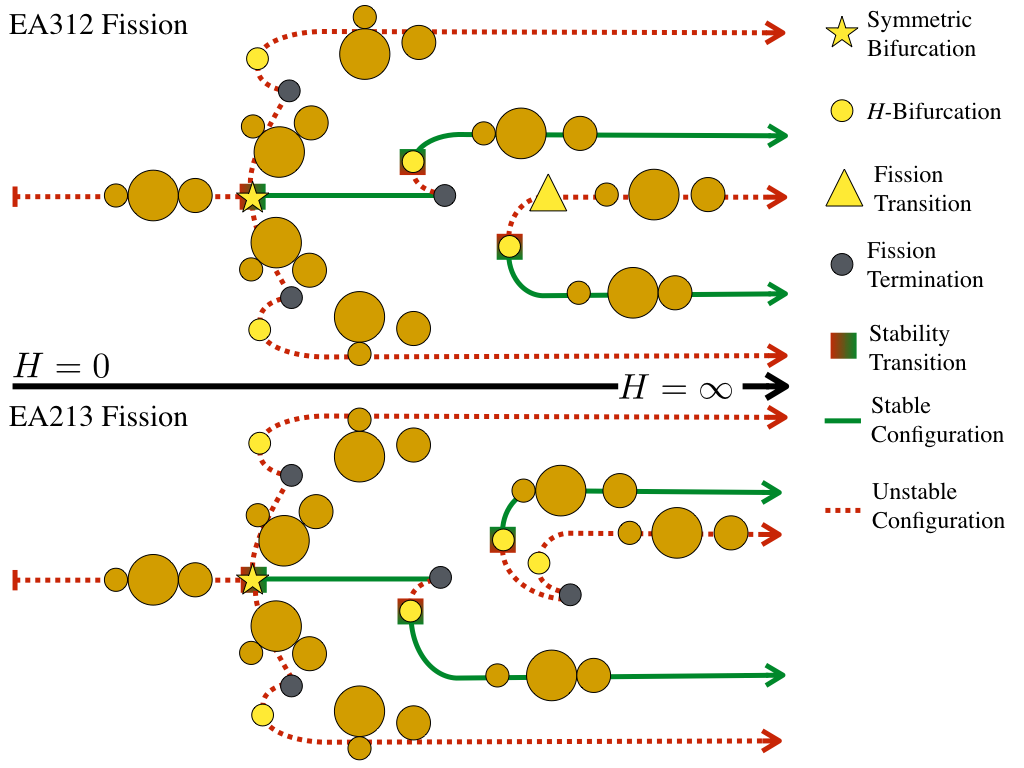}
\caption{Bifurcation diagram showing the possible branches relating to the ER213 configuration. Depending on the relative mass values two different bifurcation pathways exist. Within each pathway, the manner in which the EO configuration appears can also shift between the two pathways shown. }
\label{fig:bif3}
\end{figure}

\begin{figure}[h!]
\centering
\includegraphics[scale=0.35]{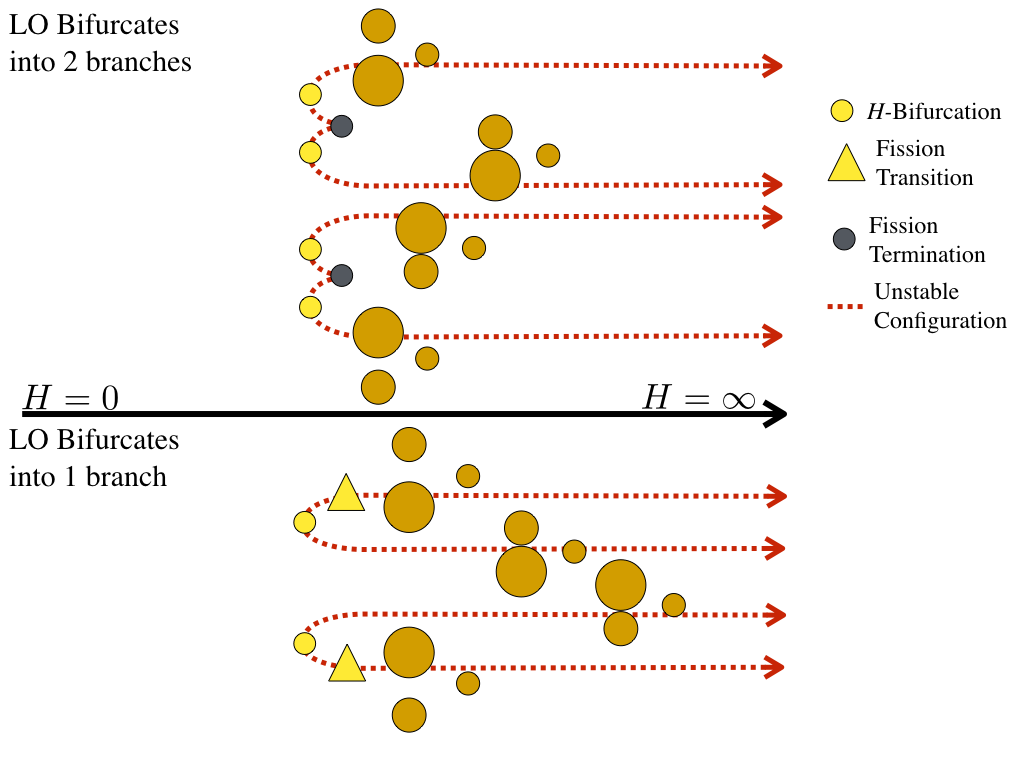}
\caption{Bifurcation diagram showing the pathways to the LO configuration.  }
\label{fig:bif4}
\end{figure}

\clearpage

\section{Problem Statement}

\subsection{The Full Body Problem}

A full body problem is defined as a set of $N$ rigid bodies that gravitationally attract each other and which have a finite density mass distribution, meaning that there are specific limits on how close they can come to each other \cite{scheeres_F2BP, scheeres_minE, scheeres_minE_chap}. The description of such a system can be directly incorporated into a Lagrangian framework where the coordinates ${\bf Q} = \left\{ Q_i; i = 1, 2, \ldots, 6N\right\}$ denote the absolute Cartesian coordinates of the bodies and the Euler angles that orient the bodies in space. The rigid body constraints place restrictions of the form $|Q_i - Q_j| \ge D_{ij}({\bf Q})$ on the system. The dynamics of the system can be described by a total Kinetic Energy and Gravitational Potential Energy and as it is an isolated system will conserve its total angular momentum, denoted as $\bfm{H}$, and can conserve its total energy, denoted as $E$, if internal dissipative forces are excluded. 

The linear momentum can be removed to reduce the system to $6(N-1)$ coordinates that are purely relative to each other and an additional 3 degrees of freedom that orient the entire system with respect to inertial space. Denote the relative coordinates as ${\bf q} = \left\{ q_i; i = 1, 2, \ldots 6(N-1)\right\}$, noting that these can always be transformed to locally reformulate the constraints as $q_i \ge D_i({\bf q})$. 

Thus, for the Full 3-Body Problem we have 12 twelve relative degrees of freedom between the three bodies. Of these only three are required to specify the relative positions of the bodies. The additional 9 correspond to each of the bodies having 3 degrees of freedom to orient themselves relative to the position configuration of the bodies. As we take the three bodies to be spheres, their relative orientation is not tracked, although we must still account for their rotational angular momentum and kinetic energy. Thus, for our purposes, our problem can be specified with only 3 degrees of freedom, plus the overall orientation of the system with respect to inertial space. 

Before continuing we define the amended potential, which plays a fundamental role in the following. 
\begin{definition}{\bf Amended Potential }
The Amended Potential is defined as the function ${\cal E} = \frac{H^2}{2 I_H} +  \mathcal{U} $ where $H$ is the total angular momentum of the system, $I_H$ is the moment of inertia of the total system taken about a principal axis of the system, in general about the rotation axis $\hat{\bfm{H}}$ which points in the direction of the total angular momentum vector, and $ \mathcal{U} $ is the gravitational potential energy of the system. The terms $I_H$ and $ \mathcal{U} $ are functions only of the relative positions and attitudes of the bodies, and for $I_H$ their orientation relative to $\hat{\bfm{H}}$. The gradients of the Amended Potential with respect to the degrees of freedom equal the force exerted on that degree of freedom when at an equilibrium or resting configuration (\cite{arnold1988mathematical}, pp 66-67). 
\end{definition}

\subsection{Spherical Full 3-Body Problem Statement}

Consider three bodies, ${\cal B}_i, i = 1,2,3$, each of which is a sphere of radius $R_i$ and, for convenience, assumed to have a common density $\rho$. 

%For definiteness we assume $R_1 \ge R_2 \ge R_3$. Thus, the masses of each sphere equals $M_i = 4\pi \rho R_i^3 / 3$, and $M_1 \ge M_2 \ge M_3$. 

The positions of these bodies can be denoted in $\mathbb{R}^3$ by Cartesian position vectors $\bfm{r}_i$. The relative positions of these bodies are denoted as $\bfm{r}_{ij} = \bfm{r}_j - \bfm{r}_i$ and have the fundamental rigid body constraint $| \bfm{r}_{ij} | \ge (R_i + R_j)$ for $i\ne j$. This lower bound, due to the bodies having finite density, is what enables resting equilibria to occur. Each of the spheres can carry angular momentum in their spin rate, although due to their symmetry the specific orientation of these spheres are arbitrary in any frame. Thus, the internal relative configuration space of the system, $\bfm{q}$, is completely specified by only three quantities
\beq
	\bfm{q} & = & \left\{ r_{12}, r_{23}, r_{31} \ | \ r_{ij} \ge (R_i + R_j)  \ \& \ |r_{ij} - r_{jk}| \le r_{ki} \le |r_{ij} + r_{jk}| \right\}
\eeq

While the configuration of the system is uniquely defined by these distances, not all distances are allowable. This means that there are geometric constraints between some of the distances independent of the finite density assumption. Thus it is sometimes easier to define a unique configuration where the restriction is clearly obvious. One such is to specify the distances between two of the bodies and the angle between these two bodies centered on the third body (see Fig.\ \ref{fig:config}). Thus, denoting the bodies with the unique indexing $i, j, k$, the configuration can be specified as 
\beq
	\bfm{q} & = & \left\{ r_{ij}, r_{jk}, \theta_{ki} \ | \ r_{lm} \ge (R_l + R_m) \right\}
\eeq
where the final distance $r_{ki}$ can be explicitly computed from the cosine rule: 
\beq
	r_{ki}^2 & = & r_{ij}^2 + r_{jk}^2 - 2 r_{ij} r_{jk} \cos\theta_{ki} \label{eq:d31}
\eeq
Note that the angle $\theta_{ki}$ will also have constraints placed upon it, as the associated length must satisfy $r_{ki} \ge R_i + R_k$. These two expressions of the third degree of freedom, $\theta_{ki}$ or $r_{ki}$, will be used equivalently. 

\begin{figure}[h!]
\centering
\includegraphics[scale=0.4]{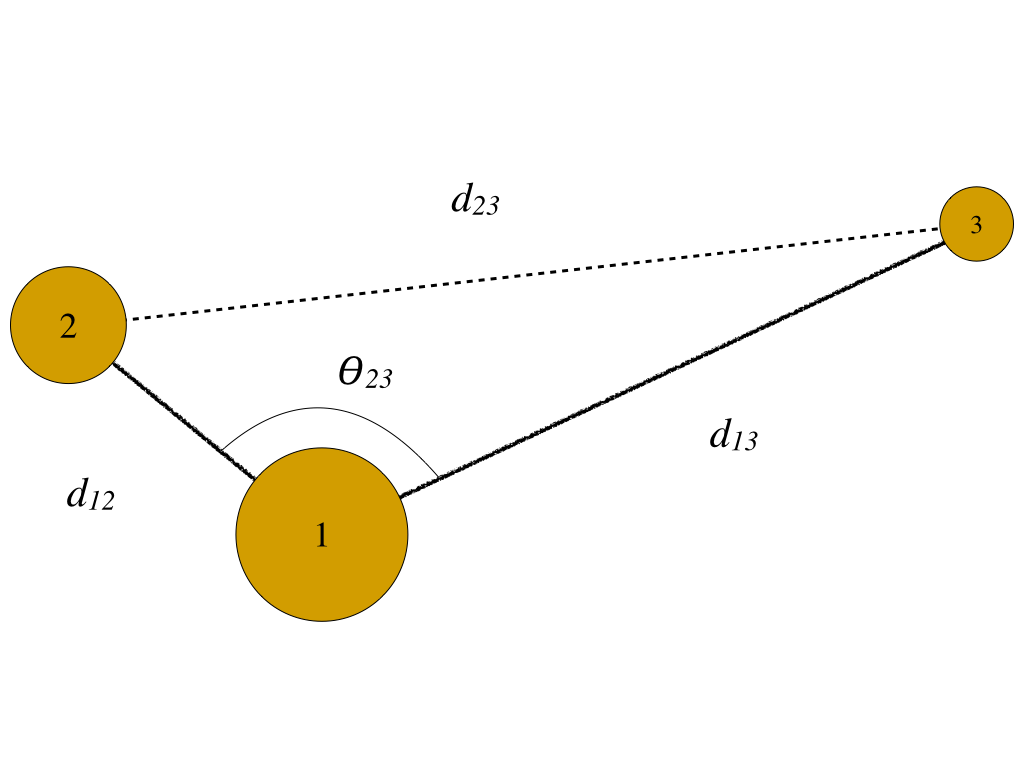}
\caption{Configuration of the system.}
\label{fig:config}
\end{figure}

There are additional degrees of freedom of the triad of bodies with respect to inertial space, which can be tracked by the unit vector of the angular momentum, $\hat{\bfm{H}}$, which are briefly discussed later. 

The gravitational potential is equivalent to the 3-body point mass potential due to the symmetry of spherical mass distributions.
\beq
	\mathcal{U} & = & - {\cal G} \left[ \frac{M_1 M_2}{r_{12}} + \frac{M_2 M_3}{r_{23}} + \frac{M_3 M_1}{r_{31}} \right] 
\eeq

The moment of rotational inertia of each sphere is equal to $2 M_i R_i^2 / 5$. For a given placement of the three masses, the total inertia dyad of the system can be constructed as
\beq
	\bfm{I} & = & \frac{1}{M_1 + M_2 + M_3} \sum_{i=1}^2 \sum_{j=2}^3 M_i M_j \left[ r_{ij}^2 \bfm{U} - \bfm{r}_{ij}  \bfm{r}_{ij} \right]  + \sum_{i=1}^3 \frac{2 M_i R_i^2}{5}  \bfm{U}
\eeq
where $\bfm{U}$ is the identity dyad. Note that this form uses the Lagrange Identity and assumes that the center of mass is nominally at a zero point. The inertia matrix is orientable, but for the amended potential only its orientation relative to the constant angular momentum vector direction, $\hat{\bfm{H}}$, is needed. Dotting the dyad on both sides by this unit vector yields 
\beq
	I_{H} & = & \frac{1}{M_1+M_2+M_3} \sum_{i=1}^{2}\sum_{j=2}^3 M_i M_j \left[ r_{ij}^2 - (\hat{\bfm{H}}\cdot\bfm{r}_{ij})^2 \right] + \sum_{i=1}^3 \frac{2 M_i R_i^2}{5}  
\eeq

The principal moments of inertia for a three point-mass system have the following relation: $I_{max} = I_{int} + I_{min}$. Furthermore the maximum moment of inertia of the point masses will always be perpendicular to the plane that the three bodies form \cite{greenwood_dynamics}. Thus, with our assumption that the body spins about its maximum moment of inertia, the quantities $(\hat{\bfm{H}}\cdot\bfm{r}_{ij}) = 0$ and the  moment of inertia simplifies to
\beq
	I_{H} & = & M_1 M_2 r_{12}^2 + M_2 M_3 r_{23}^2 + M_3 M_1 r_{31}^2 + I_S \\
	I_S & = & \frac{2}{5} M_1 R_1^2 + \frac{2}{5} M_2 R_2^2 + \frac{2}{5} M_3 R_3^2
\eeq
For rotation about the intermediate and minimum moments of inertia, we note that $I_H$ will be strictly less than or equal to this above value, with equality between the intermediate and maximum only occurring when the minimum moment of inertia of the three particles (without the rigid sphere contributions) is 0. The maximum moment of inertia of the point masses can never be zero, due to the finite size of the particles. 

%With these definitions, and an assumed value of total angular momentum for the system, $H$, the amended potential is defined as
%\beq
%	{\cal E} & = & \frac{ H^2 }{2 I_H} + \mathcal{U} 
%\eeq
%with an internal configuration space $\bfm{q}$. 

\subsection{Normalization}

To simplify the discussion, normalize the system with a length and a mass scale. The length scale used is $R_T = R_1 + R_2 + R_3$, while the mass scale is $M_T = M_1 + M_2 + M_3$. Denote $m_i = M_i / M_T$, $r_i = R_i / R_T$, and $d_{ij} = r_{ij} / R_T$. In normalized coordinates the fundamental quantities take on the values
\beq
	\mathcal{U} & = & - \left[ \frac{m_1 m_2}{d_{12}} + \frac{m_2 m_3}{d_{23}} + \frac{m_3 m_1}{d_{31}} \right] \\
	I_{H} & = & m_1 m_2 d_{12}^2 + m_2 m_3 d_{23}^2 + m_3 m_1 d_{31}^2 + I_S \label{eq:IH} \\
	I_S & = & \frac{2}{5} m_1 r_1^2 + \frac{2}{5} m_2 r_2^2 + \frac{2}{5} m_3 r_3^2
\eeq
with the angular momentum being normalized by the dividing factor $\sqrt{{\cal G} M_T^3 R_T}$ and the energy normalized by the dividing factor ${\cal G} M_T^2 / R_T $. For both $H$ and $E$ the same notational designation is kept for the normalized values.  

The normalizations provide two identities:
\beq
	r_1 + r_2 + r_3 & = & 1 \\
	m_1 + m_2 + m_3 & = & 1
\eeq
There are also fundamental relationship between the $r_i$ and the $m_i$, assuming constant density. 
\beq
	m_i & = & \frac{r_i^3}{r_1^3 + r_2^3 + r_3^3} \label{eq:massnorm} \\
	r_i & = & \frac{m_i^{1/3}}{m_1^{1/3} + m_2^{1/3} + m_3^{1/3}}
\eeq

\subsection{Parameterization of the Problem}

Any given variant of the F3BP can be identified with a point in a compact, 2-dimensional triangle, using either the masses or the radii. Plotting the radii $r_1, r_3$ or masses $m_1, m_3$ along two perpendicular axes each of them can only take values between 0 and 1, and that furthermore they will be bounded by a diagonal defined by $r_1 + r_3 \le 1$ or $m_1 + m_3 \le 1$. On the boundary of this equality $r_2 = m_2 = 0$. If, instead, a diagonal defined by $r_1+r_3 = r_{13} < 1$ or $m_1+m_3 = m_{13} < 1$ is drawn, then the value of the second radius or mass will equal $r_2 = 1 - r_{13}$ or $m_2 = 1 - m_{13}$. This also lends itself to a graphical description, shown in Fig.\ \ref{fig:triangle} for the masses. Every point within this triangle defines a unique F3BP in terms of its relative masses. In \cite{scheeres_minE} only the relative equilibria for the point $1/3, 1/3, 1/3$ was studied. This paper studies the bifurcation structure across the entire region, however due to the symmetry of the problem the study can be restricted to a specific region only. To that end, consider the restrictions
\beq
	0 \le m_3 \le m_2 \le m_1 \le 1 \\
	0 \le r_3 \le r_2 \le r_1 \le 1 
\eeq
This region is shaded in Fig.\ \ref{fig:triangle}. There are 5 other equivalent triangles defined by reordering the different inequalities given above. The approach taken will be to exhaustively study all possible relative equilibria in the denoted region, the results of which can then be easily applied to all other regions. 

\begin{figure}[h!]
\centering
\includegraphics[scale=0.45]{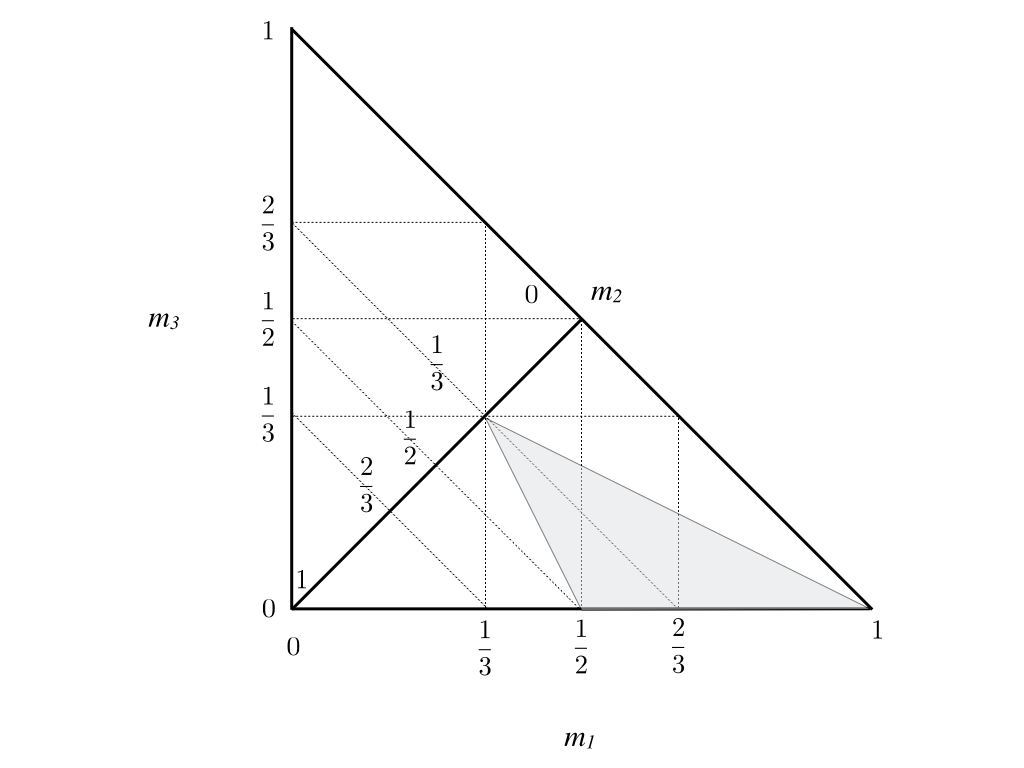}
\caption{Triangle defined for the masses with the region of study shaded.}
\label{fig:triangle}
\end{figure}

With this convention, there are additional constraints for the masses and radii.
\beq
	\frac{1}{3} \le (r_1, m_1) \le 1 \\
	0 \le (r_3, m_3) \le \frac{1}{3}  \\
	0 \le (r_3, m_3) \le (r_2, m_2) \le \frac{1}{2}
\eeq

Previous research has exhaustively explored the bifurcation structure and properties for two general cases along the boundary of this triangle. One is at the point (1/3, 1/3, 1/3), when all masses and sizes are equal \cite{scheeres_minE}. In this case a more limited number of relative equilibria were found with a less complex bifurcation structure. The other case is for $m_3 = 0$, in essence just considering the two mass case with $0 \le m_2 \le 1/2 \le m_1 \le 1$, along the base of the triangle \cite{scheeres_minE}.  In this region the number of relative equilibria are also much fewer and the bifurcation structure less complex. 

For the problem we study, the spherical 3-body problem, we can easily just consider the planar motion of the system, with rotation occurring about a principal moment of inertia of the system. We note that the spheres contribute to the system's total angular momentum but have the same moment of inertia about any axis. In general we will assume that the system rotates about the maximum moment of inertia, but will justify this later. 

\section{Background and Supporting Results}

A few definitions and supporting Lemmas are stated for use in this paper. Some of these are classical results while others have been considered more recently \cite{scheeres_minE, scheeres_minE_chap}, thus the proofs are only briefly reviewed to point out their salient features. Specific results for our current analysis are worked out in detail. 

\begin{lemma}
\label{lemma:1} 
The Total Energy of the system is conserved in the absence internal dissipation and equals $E = T_r + {\cal E}$, where $E$ is the total energy and $T_r$ is kinetic energy of the system components relative to each other, evaluated in the rotating frame with inertial angular velocity $\bfm{H} / I_H$. 
\end{lemma}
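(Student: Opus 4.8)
The plan is to establish the stated identity as an energy decomposition of K\"onig type, carried out in a frame that co-rotates with the total system, and then to obtain conservation from the isolation of the system. I would begin with the elementary observation that, for an isolated system with no internal dissipation, the total energy $E = T + \mathcal{U}$ is conserved, where $T$ is the full kinetic energy (the translational kinetic energy of the three centers plus the spin kinetic energy of the spheres) and $\mathcal{U}$ is the gravitational potential, which depends only on the relative positions. The substance of the lemma is therefore the decomposition $T = T_r + H^2/(2 I_H)$, after which $E = T_r + H^2/(2I_H) + \mathcal{U} = T_r + \mathcal{E}$ follows at once from the definition of the amended potential.

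To carry out the decomposition I would introduce a frame rotating at angular velocity $\bfm{\omega} = (H/I_H)\hat{\bfm{H}}$ and write each inertial velocity as $\bfm{v}_i = \bfm{v}_i' + \bfm{\omega}\times\bfm{r}_i$, where $\bfm{v}_i'$ is the velocity seen in the rotating frame. Expanding $T = \frac{1}{2}\sum m_i |\bfm{v}_i|^2$, augmented by the sphere spin terms, produces three groups: the relative kinetic energy $T_r = \frac{1}{2}\sum m_i |\bfm{v}_i'|^2$ together with the relative spin energy, a cross term $\bfm{\omega}\cdot\bfm{H}'$ in which $\bfm{H}'$ is the total angular momentum measured in the rotating frame, and the rigid-rotation term $\frac{1}{2}\bfm{\omega}\cdot\bfm{I}\cdot\bfm{\omega}$, with $\bfm{I}$ the full inertia dyad including the sphere contributions $I_S$.

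The key step is that the specific choice $\bfm{\omega} = (H/I_H)\hat{\bfm{H}}$ annihilates the cross term. Using the splitting $\bfm{H} = \bfm{I}\cdot\bfm{\omega} + \bfm{H}'$ and the identification $I_H = \hat{\bfm{H}}\cdot\bfm{I}\cdot\hat{\bfm{H}}$, one computes $\hat{\bfm{H}}\cdot\bfm{H}' = \hat{\bfm{H}}\cdot\bfm{H} - (H/I_H)(\hat{\bfm{H}}\cdot\bfm{I}\cdot\hat{\bfm{H}}) = H - H = 0$, so that $\bfm{\omega}\cdot\bfm{H}' = (H/I_H)(\hat{\bfm{H}}\cdot\bfm{H}') = 0$. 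The same two inputs give $\frac{1}{2}\bfm{\omega}\cdot\bfm{I}\cdot\bfm{\omega} = \frac{1}{2}(H^2/I_H^2)\,I_H = H^2/(2 I_H)$, which is precisely the rotational piece of the amended potential. Collecting the surviving terms yields $T = T_r + H^2/(2 I_H)$, and hence the claimed identity.

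I expect the main obstacle to be bookkeeping rather than any genuine difficulty: one must treat the spin degrees of freedom of the spheres consistently with the translational motion, so that both $\bfm{H}$ and $\bfm{I}$ truly carry the $I_S$ contribution, and one must confirm that the planar, maximum-moment-of-inertia assumption keeps $\bfm{\omega}$ parallel to $\hat{\bfm{H}}$ so that the decomposition is exact rather than merely instantaneous. Because the spheres are spherically symmetric, their spin contributes to $\bfm{H}$ and to $T$ while leaving $I_S$ orientation-independent, and it is exactly this feature that lets the cross term cancel cleanly; I would make that point explicit in order to close the argument.
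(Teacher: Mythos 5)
Your proof is correct, but it follows a genuinely different route from the paper's. The paper does not perform the kinetic-energy decomposition explicitly: its proof is essentially an appeal to prior machinery, citing Smale and Arnold for the identification of ${\cal E}$ as the amended potential and the author's earlier work for the fact that, for rotation about a principal axis, ${\cal E}$ arises from a Routh reduction of the system, whose Routhian admits a Jacobi integral identical to the total energy; conservation and the splitting $E = T_r + {\cal E}$ are inherited from that reduced-system structure. You instead give a self-contained K\"onig-type computation: write $\bfm{v}_i = \bfm{v}_i' + \bfm{\omega}\times\bfm{r}_i$ with $\bfm{\omega} = (H/I_H)\hat{\bfm{H}}$, observe that the cross term $\bfm{\omega}\cdot\bfm{H}'$ vanishes because $\hat{\bfm{H}}\cdot\bfm{H}' = H - (H/I_H)\,\hat{\bfm{H}}\cdot\bfm{I}\cdot\hat{\bfm{H}} = 0$, and collect $\tfrac{1}{2}\bfm{\omega}\cdot\bfm{I}\cdot\bfm{\omega} = H^2/(2I_H)$, so that $T = T_r + H^2/(2I_H)$ and conservation follows from isolation. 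Your argument has the virtue of being elementary and of making clear that the identity is purely algebraic and instantaneous --- it needs only the definition $I_H = \hat{\bfm{H}}\cdot\bfm{I}\cdot\hat{\bfm{H}}$ and the choice of frame rate, not the principal-axis assumption per se (that assumption matters in the paper chiefly for Lemma \ref{lemma:2} and for the Routhian interpretation). What the paper's route buys is integration into the variational framework: the Routh reduction presents ${\cal E}$ as the effective potential of the reduced system, which is exactly the structure exploited later (Lemma \ref{lemma:3}) to read off equilibrium and stability conditions from gradients and Hessians of ${\cal E}$. Your handling of the sphere spins (folding them into $\bfm{H}'$ and $I_S$ and noting that spherical symmetry keeps the bookkeeping consistent) correctly addresses the one place where the computation could have gone wrong.
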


\begin{proof}
For rotation about a principal axis of the system, ${\cal E}$ equals the amended potential as introduced by Smale \cite{smaleI, smaleII}, and specifically considered by Arnold for the 3-body problem in \cite{arnold1988mathematical}, pp. 66-67. For a system rotating about its principal axis the proofs in \cite{scheeres_minE_chap} apply, showing that the amended potential arises from a Routh reduction of the system. The Routhian is shown to have a Jacobi integral, which is identical to the total energy of the system.
\end{proof}

\begin{lemma}
\label{lemma:2} 
The total energy of the system is strictly bounded from below by the amended potential: $\mathcal{ E} \le E$. If $E = \mathcal{ E}$, then $T_r = 0$. If the system is momentarily stationary ($T_r = 0$) and spins about a principal axis of inertia of the system, then $\mathcal{E} = { E}$. Thus the inequality is sharp and the lower bound can be achieved. 
\end{lemma}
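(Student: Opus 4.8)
The plan is to derive everything from the decomposition $E = T_r + \mathcal{E}$ established in Lemma \ref{lemma:1}, together with the elementary fact that $T_r$ is a kinetic energy and therefore non-negative. This reduces the entire lemma to a one-line inequality plus a careful reading of the equality cases.

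First I would invoke Lemma \ref{lemma:1} to write $E = T_r + \mathcal{E}$, valid for rotation about a principal axis of the system. The central observation is that $T_r$, being the kinetic energy of the components measured relative to each other in the rotating frame with angular velocity $\bfm{H}/I_H$, is a positive-definite quadratic form in the relative velocities; hence $T_r \ge 0$, with equality precisely when all relative velocities vanish. Combining this with the decomposition gives $\mathcal{E} = E - T_r \le E$, which is the asserted lower bound.

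Next, the two equality statements fall out directly from the same identity. If $E = \mathcal{E}$ then $T_r = E - \mathcal{E} = 0$, so the system is momentarily stationary, giving the second claim. For the converse, I would suppose $T_r = 0$ and that the system spins about a principal axis; then the decomposition of Lemma \ref{lemma:1} applies and yields $E = 0 + \mathcal{E} = \mathcal{E}$, showing the bound is attained and is therefore sharp.

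The one point requiring care is the role of the principal-axis hypothesis in the converse, and this is where the main (and essentially only) obstacle lies. I would argue that for rotation about a principal axis $\hat{\bfm{e}}$ with moment $I_{\hat{e}}$ one has $H = I_{\hat{e}}\, \omega$ and total rotational kinetic energy $T = \tfrac{1}{2} I_{\hat{e}}\, \omega^2 = H^2 / (2 I_{\hat{e}})$, which coincides exactly with the rotational term $H^2/(2 I_H)$ appearing in the amended potential. If instead the system spun about a non-principal axis, the inertia dyad would introduce cross terms, the rotational kinetic energy would strictly exceed $H^2/(2 I_H)$, and the clean identity $E = \mathcal{E}$ would fail even when $T_r = 0$. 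Thus the principal-axis assumption is precisely what guarantees that the rotational kinetic energy is fully absorbed into $\mathcal{E}$, leaving $T_r$ as the sole remaining positive contribution to the total energy; the remaining steps are then immediate.
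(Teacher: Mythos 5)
Your reduction to Lemma \ref{lemma:1} plus $T_r \ge 0$ is a genuinely different route from the paper's, which (via the reference \cite{scheeres_minE_chap}) proves the inequality by applying the Cauchy inequality to the angular momentum and obtains sharpness by direct construction. The difficulty is that your route only establishes $\mathcal{E} \le E$ on the set of motions where you are entitled to invoke Lemma \ref{lemma:1}, namely rotation about a principal axis --- a restriction you state yourself when you invoke it. But the lemma asserts the lower bound for arbitrary states of the system; that is exactly why its equality cases are asymmetric: $E = \mathcal{E} \Rightarrow T_r = 0$ carries no extra hypothesis, while the converse direction carries the principal-axis qualifier. If the whole lemma were confined to principal-axis rotation, that qualifier would be redundant. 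So for any state whose angular momentum direction is not a principal axis of the instantaneous inertia dyad, your argument proves nothing about the main claim.

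You notice this case in your last paragraph but dispose of it with an assertion: for non-principal rotation ``the rotational kinetic energy would strictly exceed $H^2/(2I_H)$.'' That assertion is not an aside --- it is the entire content of the general inequality, equivalent to $H^2 \le 2 T I_H$, i.e.\ the Cauchy--Schwarz estimate $(\hat{\bfm{H}}\cdot\bfm{I}\cdot\bfm{\omega})^2 \le (\hat{\bfm{H}}\cdot\bfm{I}\cdot\hat{\bfm{H}})\,(\bfm{\omega}\cdot\bfm{I}\cdot\bfm{\omega})$ that the paper's cited proof runs; treating it as obvious assumes the hard half of the lemma. Your surrounding physical picture is also off: in the frame rotating with $\bfm{\omega} = (H/I_H)\hat{\bfm{H}}$ the cross term in the kinetic energy is $\bfm{\omega}\cdot(\bfm{H} - \bfm{I}\cdot\bfm{\omega})$, which vanishes identically, not only for principal axes; hence the excess of $T$ over $H^2/(2I_H)$ in non-principal rotation is not a new cross term but is $T_r$ itself, which then simply cannot vanish. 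To close the proof you must either carry out the Cauchy-type estimate $T \ge H^2/(2I_H)$ for general motions, or prove this identical vanishing of the cross term so that the decomposition $E = T_r + \mathcal{E}$ holds without the principal-axis hypothesis; as written, neither step appears, and the unconditional inequality is assumed rather than proven.
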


\begin{proof}
Lemma \ref{lemma:2} is proven in \cite{scheeres_minE_chap}. The proof establishes the inequality using the Cauchy Inequality applied to the angular momentum, and shows that it is sharp through direct construction. 
\end{proof}

Another important feature of the system involves the existence of minimum energy configurations for full body systems. The following lemma establishes the existence of minimum energy states for all values of angular momentum. This result also provides the fundamental motivation for the current study. 
\begin{lemma}
For a finite density distribution, the amended potential ${\cal E}$ has a global minimum for all values of angular momentum $H$.
\end{lemma}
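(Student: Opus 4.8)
The plan is to treat this as a standard problem in the direct method of the calculus of variations: show that $\mathcal{E}$ is a continuous function on a closed feasible region, that it is bounded below, and that its infimum is attained because an appropriate sublevel set is compact. The feasible region is the set of triples $(d_{12}, d_{23}, d_{31})$ obeying the triangle inequalities together with the finite-density contact constraints $d_{ij} \ge r_i + r_j$; since all of these are non-strict, the region is closed. Continuity of $\mathcal{E} = H^2/(2 I_H) + \mathcal{U}$ is immediate: the contact constraints keep every $d_{ij} \ge r_i + r_j > 0$, so $\mathcal{U}$ has no collision singularity, and $I_H \ge I_S > 0$ (the rigid-sphere term is strictly positive) so $H^2/(2 I_H)$ is finite everywhere. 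This is exactly where finite density enters, and it is the structural reason the point-mass problem has no minimizer.

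For the lower bound I would note that $H^2/(2 I_H) \ge 0$ and, applying the contact constraints termwise, $\mathcal{U} \ge -\left( \frac{m_1 m_2}{r_1+r_2} + \frac{m_2 m_3}{r_2+r_3} + \frac{m_3 m_1}{r_3+r_1} \right)$, a finite constant. Hence $\mathcal{E}$ is bounded below on the entire feasible region, for every value of $H$.

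The main obstacle will be coercivity: the feasible region is unbounded (bodies may separate arbitrarily), and crucially $\mathcal{E}$ does \emph{not} tend to $+\infty$ there, so the usual coercivity argument fails. Instead I would classify the ``ends'' of the configuration space and compute the limiting behavior of $\mathcal{E}$ along each. By the triangle inequality one cannot send a single distance to infinity while the other two stay bounded, so the only ends are (i) all three distances $\to \infty$, where $\mathcal{U} \to 0$ and $I_H \to \infty$ give $\mathcal{E} \to 0$, and (ii) two distances $\to \infty$ with one pair $(i,j)$ remaining within finite range, where $H^2/(2 I_H) \to 0$ and $\mathcal{U} \to -m_i m_j / d_{ij} \ge -m_i m_j/(r_i+r_j)$. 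Thus along every end $\liminf \mathcal{E} \ge V^\ast$, where $V^\ast = -\max_{\text{pairs}} m_i m_j/(r_i+r_j) \le 0$.

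The remaining step, which closes the argument, is to exhibit one interior configuration $q_0$ with $\mathcal{E}(q_0) < V^\ast$: place the pair attaining $V^\ast$ in contact and the third body at a large finite distance $d$. A short estimate gives $\mathcal{E}(q_0) - V^\ast \approx H^2/(2 C d^2) - C'/d$ with $C, C' > 0$, which is negative for $d$ large since the $1/d$ term dominates; this construction works for every $H$. Consequently the sublevel set $\{ \mathcal{E} \le \mathcal{E}(q_0) \}$ cannot contain any sequence escaping to infinity (there $\liminf \mathcal{E} \ge V^\ast > \mathcal{E}(q_0)$), so it is bounded; being also closed, it is compact and nonempty, and a continuous function attains its minimum on it. That minimum is the global minimum of $\mathcal{E}$, establishing the lemma. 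The one point demanding care is the end analysis in case (ii): one must verify the limit is controlled uniformly by the contact bound rather than by the particular sequence, which the inequality $d_{ij} \ge r_i + r_j$ supplies directly.
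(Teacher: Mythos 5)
Your proof is correct (granted the standing assumption that all three masses are strictly positive), and its skeleton matches the paper's: the paper's own proof is essentially a citation of an earlier reference plus a sketch of exactly your ``end'' analysis --- if body $i$ escapes, $\mathcal{E}$ tends to $\mathcal{U}_{jk}$, which finite density traps in $[-m_j m_k/(1-r_i),\,0]$ (note $1-r_i = r_j+r_k$ in the paper's normalization, so its interval is precisely your contact bound), and if all bodies escape, $\mathcal{E}\to 0$. What you add, and what the paper's sketch omits entirely, is the step that actually closes the argument: the explicit construction of an interior configuration $q_0$ with $\mathcal{E}(q_0) < V^\ast$ strictly below every escape limit, which is what forces the sublevel set $\{\mathcal{E}\le\mathcal{E}(q_0)\}$ to be bounded, hence compact, so that the direct method applies. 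Without that strict inequality one could not rule out the infimum being approached only as a body escapes, so this is genuinely the crux rather than a formality; your version is therefore self-contained where the paper defers to \cite{scheeres_minE}. One caveat worth recording: your construction needs $m_k>0$ for the attractive $-C'/d$ term to exist, so it degenerates on the boundary of the parameter triangle where one mass vanishes (there $\mathcal{E}$ no longer depends on the massless body's position, and the lemma follows from the reduced two-body statement instead); within the finite-density problem as posed, with all radii and hence all masses positive, your argument is complete.
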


\begin{proof}
The proof is given in \cite{scheeres_minE} and involves showing that ${\cal E}$ is compact and bounded over all possible values of the configuration space. This requires the finite density assumption, as this blocks individual point masses from coming arbitrarily close to each other. If body $i$ escapes to $\infty$ relative to bodies $j$ and $k$, then the amended potential takes on the value $\mathcal{E} = \mathcal{U}_{jk}$, and remains bounded in the interval $[- m_j m_k / (1-r_i), 0]$. If all three bodies escape to $\infty$ relative to each other then $\mathcal{E} = 0$. 
\end{proof}

Given the definition of the amended potential and its properties relative to the total energy of the system, the relative equilibrium and energetic stability can be defined. Following this conditions under which these are satisfied are stated. 

\begin{definition}{\bf Relative Equilibrium}
A given configuration $\bfm{q}^*$ is said to be a ``Relative Equilibrium'' if its internal kinetic energy is null ($T_r = 0$), meaning that ${\cal E} = E$ at an instant, and if it remains in this state over at least a finite interval of time. 
\end{definition}

\begin{definition}{\bf Energetic Stability}
A given relative equilibrium $\bfm{q}^*$ is said to be ``Energetically Stable'' if any equi-energy deviation from that relative equilibrium requires a negative internal kinetic energy, $T_r < 0$, meaning that this motion is not allowed.
\end{definition}

\begin{lemma}
\label{lemma:3}
Consider a system with an amended potential ${\cal E}$ as defined above with $n$ degrees of freedom, $m$ of which are activated in such a way that only the variations $\delta q_j \ge 0$, $j = 1, 2, \ldots, m$ are allowed. The degrees of freedom $q_i$ for $m < i \le n$ are free. 

The Necessary and Sufficient conditions for a system in a configuration $\bfm{q}^*$ to be in a relative equilibrium are that at this configuration:
\begin{enumerate}
\item
$T_r = 0 $
\item
${\cal E}_{q_j} \ge 0 \ \forall \ 1 \le j \le m$
\item
${\cal E}_{q_i} = 0 \ \forall \ m < i \le n $
\end{enumerate}

The Necessary and Sufficient conditions for a system in a relative equilibrium to be energetically stable are that:
\begin{enumerate}
\item
$ {\cal E}_{q_j} > 0 \ \forall \ 1 \le j \le m$
\item
$ \left[ \frac{\partial^2 {\cal E}}{\partial q_i \partial q_k} \right] > 0 \ \forall \ m < i,k \le n$
\end{enumerate}

\end{lemma}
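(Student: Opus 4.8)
The plan is to convert both dynamical questions --- whether $\mathbf{q}^*$ is a relative equilibrium and whether it is energetically stable --- into static variational statements about the amended potential $\mathcal{E}$ restricted to the feasible configuration set $\{q_j \ge 0 \text{ (in local boundary coordinates)}, q_i \text{ free}\}$. The engine for this reduction is Lemmas \ref{lemma:1} and \ref{lemma:2}: conservation gives $E = T_r + \mathcal{E}$, and the sharp bound $\mathcal{E} \le E$ with equality exactly when $T_r = 0$ means that along any equi-energy variation the admissible kinetic energy is $T_r = \mathcal{E}(\mathbf{q}^*) - \mathcal{E}(\mathbf{q})$. I will show that a relative equilibrium is precisely a constrained critical (first-order KKT) point of $\mathcal{E}$, and that energetic stability is precisely the condition that $\mathbf{q}^*$ be a strict local constrained minimum of $\mathcal{E}$.

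For the relative-equilibrium conditions I would argue both directions using the force interpretation of $\nabla\mathcal{E}$ supplied by the definition of the amended potential (its gradient equals the generalized force that must be supplied at a resting configuration). Necessity: $T_r = 0$ is definitional; if $\mathcal{E}_{q_i}\ne 0$ for some free coordinate then the nonzero effective force $-\mathcal{E}_{q_i}$ produces acceleration and $T_r$ grows away from zero, contradicting persistence of the rest state, so $\mathcal{E}_{q_i} = 0$; if $\mathcal{E}_{q_j} < 0$ for some activated coordinate the contact would have to supply a physically impossible tensile reaction, the bodies separate, and the rest state is again not maintained, so $\mathcal{E}_{q_j}\ge 0$. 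Sufficiency: when $\mathcal{E}_{q_i}=0$ the free directions are force-balanced, and when $\mathcal{E}_{q_j}\ge 0$ the one-sided constraint can supply the compressive reaction pressing the bodies into contact, so with $T_r = 0$ the configuration remains at rest over a finite interval. This establishes conditions 1--3 as necessary and sufficient.

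For energetic stability I would first restate the definition in energy form: a nonzero admissible deviation $\delta\mathbf{q}$ at the same energy requires $T_r = \mathcal{E}(\mathbf{q}^*) - \mathcal{E}(\mathbf{q}^*+\delta\mathbf{q}) < 0$, so stability is equivalent to $\mathcal{E}(\mathbf{q}^*+\delta\mathbf{q}) > \mathcal{E}(\mathbf{q}^*)$ for all small admissible $\delta\mathbf{q}\ne 0$, i.e.\ a strict local constrained minimum. I would then Taylor-expand $\mathcal{E}$ about $\mathbf{q}^*$ and insert the first-order conditions. The free-coordinate first-order terms vanish, so along free directions the sign of the increment is governed by the quadratic form, forcing the Hessian block $[\partial^2\mathcal{E}/\partial q_i\partial q_k]$ to be positive definite (condition 2). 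Along an activated direction with $\delta q_j > 0$ the increment is $\mathcal{E}_{q_j}\delta q_j$ to leading order, strictly positive precisely when $\mathcal{E}_{q_j} > 0$ (condition 1); if instead $\mathcal{E}_{q_j} = 0$ the second-order term can be negative and strictness fails. Running these two test deviations shows the conditions are necessary, and the expansion shows they are sufficient.

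The step I expect to be the main obstacle is the second-order analysis in the presence of the one-sided constraints, namely showing that strict positivity $\mathcal{E}_{q_j} > 0$ together with positive definiteness of the free-direction Hessian is exactly sufficient for a strict local minimum over mixed deviations that simultaneously activate a constraint and move in free directions. The clean way to handle this is the standard critical-cone argument: the tangent cone of admissible directions is $\{\delta q_j\ge 0,\ \delta q_i \text{ free}\}$, the first-order term $\sum_j \mathcal{E}_{q_j}\delta q_j$ is nonnegative on it and vanishes only when every active $\delta q_j = 0$ (by the strict complementarity $\mathcal{E}_{q_j} > 0$), so the cone on which the quadratic form must be controlled collapses to exactly the free directions, where positive definiteness of the reduced Hessian closes the argument. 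The remaining care is bookkeeping of the sign conventions for the constraint reactions and confirming that the mixed second-order cross terms are dominated by the strictly positive first-order constraint terms for sufficiently small deviations.
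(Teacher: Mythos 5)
Your proposal is correct and takes essentially the same approach as the paper: the paper defers the details of this lemma to its cited reference, describing that proof as resting on variations of the amended potential, conservation of energy, and the Lagrange equations of motion for the full body system, which are exactly the ingredients you assemble (Lemmas \ref{lemma:1} and \ref{lemma:2} to convert relative equilibrium and energetic stability into first-order KKT and strict constrained-minimum statements for ${\cal E}$, with the force interpretation of $\nabla {\cal E}$ standing in for the equations of motion). The strict-complementarity/critical-cone bookkeeping you flag as the main obstacle is the right way to close the second-order sufficiency argument, so no gap.
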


\begin{proof}
The proof of Lemma \ref{lemma:3} is found in \cite{scheeres_minE_chap}. It relies on taking variations of the amended potential, asserting the principle of conservation of energy and using the general form of the Lagrange equations of motion for the full body system. 
\end{proof}

Next a few results of relevance for the system are stated regarding bifurcation of relative equilibria and their stability. 
Specific example bifurcations and their properties are shown in Fig.\ \ref{fig:bif_ex}

\begin{figure}[h!]
\centering
\includegraphics[scale=0.35]{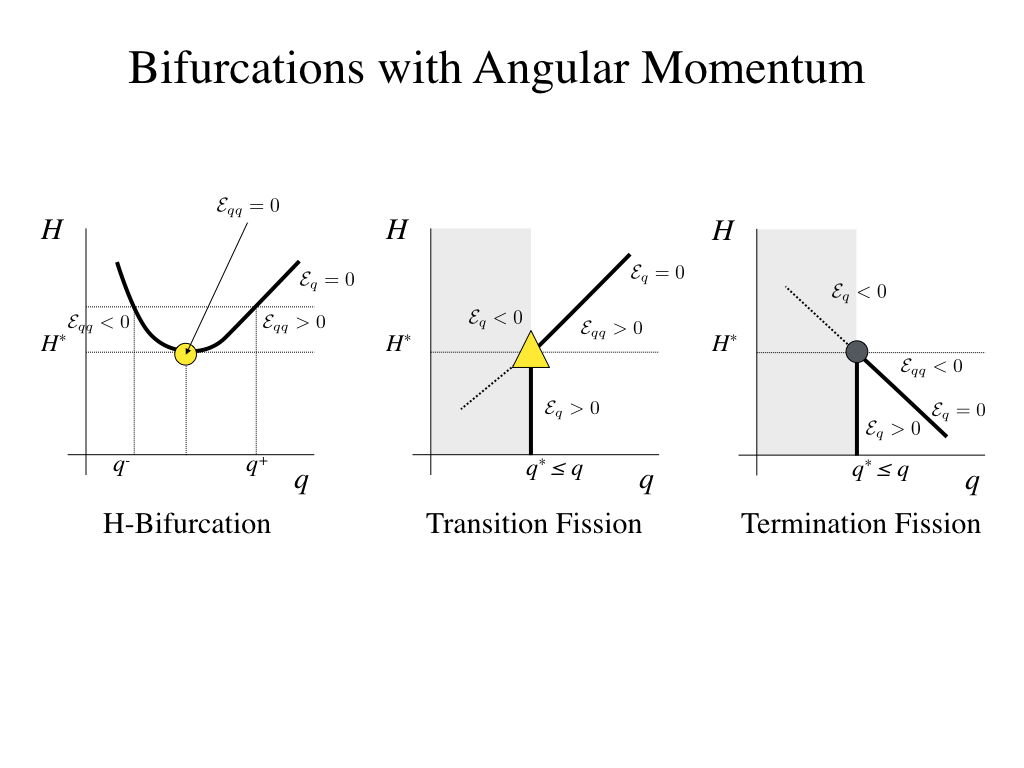}
\caption{Examples of bifurcations of interest in this problem, and their stability properties.  }
\label{fig:bif_ex}
\end{figure}

%\begin{definition}
%{\bf Asymmetric Bifurcation}
%A bifurcation of two relative equilibria which follow asymmetric paths relative to each other at changing values of angular momentum. 
%\end{definition}

\begin{definition}
{\bf Symmetric Bifurcation}
A bifurcation of two relative equilibria which follow a symmetric path relative to each other about a reflection line at changing values of angular momentum. 
\end{definition}

\begin{definition}
{\bf H-Bifurcation}
An H-Bifurcation occurs when, under increasing angular momentum, a pair of relative equilibria appear in a degree of freedom $q$ that is not at a constraint. At its first appearance there must be a degeneracy of the form $\mathcal{E}_{qq} = 0$ that will generically disappear under increasing angular momentum. 
\end{definition}

\begin{definition}
{\bf Fission} A collection of bodies in a relative equilibria with at least one active constraint is said to ``fission'' if, under an increase in angular momentum, the active constraint is released, meaning that a free relative equilibria intersects with it. Following fission the body may either transition into a new relative equilibrium without that active constraint or may no longer lie in any relative equilibrium associated with that configuration. 
\end{definition}

\begin{definition}
{\bf Termination Fission}
A fission bifurcation where the relative equilibria disappear at higher values of angular momentum.
\end{definition}

\begin{definition}
{\bf Transition Fission}
A fission bifurcation where the relative equilibria continues with its constraint inactive at higher values of angular momentum. 
\end{definition}

Now a particularly useful lemma is proven, which enables us to relate the stability of equilibrium points to how their coordinate changes as a function of angular momentum $H$. 

\begin{lemma}
\label{theorem:X}
Assume a relative equilibria exists with a 1-1 relationship between a single degree of freedom $q$ and the angular momentum $H$, meaning that along the local family of relative equilibria as the angular momentum is changed only the degree of freedom $q$ changes. Then $\mathrm{sign}( \mathcal{E}_{qq} ) = \mathrm{sign}( \partial H/\partial q )$ at the relative equilibria. Thus, if $\partial H / \partial q < 0 $ the equilibrium point will be energetically unstable and if $\partial H / \partial q > 0 $ it could be stable, depending on the other degrees of freedom. 
\end{lemma}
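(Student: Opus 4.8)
The plan is to treat the equilibrium condition as an implicit relation between $q$ and $H$ and differentiate it. By Lemma~\ref{lemma:3}, a free degree of freedom $q$ satisfies $\mathcal{E}_q = 0$ at a relative equilibrium; along the assumed one-parameter family this holds identically, so $\mathcal{E}_q(q,H) = 0$ defines $H$ as a function of $q$ (and conversely, by the $1$--$1$ hypothesis). Differentiating this identity along the family gives $\mathcal{E}_{qq} + \mathcal{E}_{qH}\,(\partial H/\partial q) = 0$, hence $\partial H/\partial q = -\mathcal{E}_{qq}/\mathcal{E}_{qH}$, so the claimed sign relation reduces entirely to pinning down the sign of the mixed partial $\mathcal{E}_{qH}$.

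Next I would compute $\mathcal{E}_{qH}$ directly from $\mathcal{E} = \frac{H^2}{2 I_H} + \mathcal{U}$. Since $I_H$ and $\mathcal{U}$ depend only on the configuration and not on $H$, one has $\mathcal{E}_q = -\frac{H^2 I_{H,q}}{2 I_H^2} + \mathcal{U}_q$, and therefore $\mathcal{E}_{qH} = -H I_{H,q}/I_H^2$. With $H > 0$ and $I_H > 0$, the sign of $\mathcal{E}_{qH}$ is simply the opposite of the sign of $I_{H,q}$.

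The crux is thus to show $I_{H,q} > 0$, i.e.\ that the system moment of inertia strictly increases with the expanding degree of freedom. This is where I would invoke the explicit structure of the problem: in the pairwise-distance coordinates $I_H = m_1 m_2 d_{12}^2 + m_2 m_3 d_{23}^2 + m_3 m_1 d_{31}^2 + I_S$ with $I_S$ constant, so taking $q$ to be any one of the $d_{ij}$ (the three side lengths, which are independent interior coordinates wherever the triangle inequalities are strict) gives $I_{H,q} = 2 m_i m_j d_{ij} > 0$ at once; the same positivity holds for the angle coordinate $\theta_{ki}$ via the cosine rule (\ref{eq:d31}), since $\partial d_{ki}/\partial\theta_{ki} > 0$ on $(0,\pi)$. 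Consistency with the equilibrium relation $\mathcal{U}_q = \frac{H^2 I_{H,q}}{2 I_H^2}$ then also shows $\mathcal{U}_q > 0$, as expected for a separation coordinate. Hence $\mathcal{E}_{qH} < 0$, and $\mathrm{sign}(\partial H/\partial q) = -\mathrm{sign}(\mathcal{E}_{qq})\,\mathrm{sign}(\mathcal{E}_{qH}) = \mathrm{sign}(\mathcal{E}_{qq})$.

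Finally I would tie this back to stability through Lemma~\ref{lemma:3}: for a free degree of freedom, energetic stability requires the Hessian block to be positive definite, in particular $\mathcal{E}_{qq} > 0$. Thus $\partial H/\partial q < 0$ forces $\mathcal{E}_{qq} < 0$ and instability, while $\partial H/\partial q > 0$ gives the necessary condition $\mathcal{E}_{qq} > 0$, leaving stability to be decided by the remaining degrees of freedom. The main obstacle is the sign of $\mathcal{E}_{qH}$; once the monotonicity $I_{H,q} > 0$ is secured, everything else is a one-line implicit-differentiation argument. A minor point to dispose of is the degenerate locus $\mathcal{E}_{qq} = 0$ (the H-bifurcation points), where both sides of the identity vanish simultaneously and the $1$--$1$ correspondence breaks down; the statement is to be read on the regular part of the family.
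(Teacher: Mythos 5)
Your proposal is correct and follows essentially the same route as the paper: implicit differentiation of the equilibrium condition $\mathcal{E}_q = 0$ along the family to get $\mathcal{E}_{qq} = -\mathcal{E}_{qH}\,(\partial H/\partial q)$, then computing $\mathcal{E}_{qH} = -H I_{Hq}/I_H^2$ from the amended potential and using $I_{Hq} > 0$ (which the paper asserts by inspection of Eqn.~\ref{eq:IH}, and which you verify slightly more explicitly for both distance and angle coordinates). Your closing remarks on Lemma~\ref{lemma:3} and on excluding the degenerate locus $\mathcal{E}_{qq}=0$ are consistent with, and mildly sharpen, the paper's argument.
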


\begin{proof}
From the lemma statement it can be assumed that all other degrees of freedom lie in a relative equilibrium condition independent of the local value of $H$. Given this, assume that there exists a value $q^*$ such that the scalar equation $\left. \mathcal{E}_q\right|_* = - H^2 / (2 I_H^2) I_{Hq} + \mathcal{U}_q = 0$. This can be solved for $H^* = \left.\left(I_H \sqrt{ 2 I_{Hq} / \mathcal{U}_q}\right)\right|_*$, where the righthand side is a function of $q^*$ and by assumption is non-singular. Now consider a neighboring relative equilibrium at a different value of $H$ and hence $q$, with the values defined locally by the expansion
\beqs
	\mathcal{E}_q( H^* + \Delta H, q^* + \Delta q) & = & 0 + \left.\mathcal{E}_{qH}\right|_* \left.\frac{\partial H}{\partial q}\right|_* \Delta q + \left.\mathcal{E}_{qq}\right|_* \Delta q + \ldots 
\eeqs
Setting this to zero and solving for an arbitrary $\Delta q$ yields
\beqs
	\left.\mathcal{E}_{qq}\right|_* & = & - \left.\mathcal{E}_{qH}\right|_* \left.\frac{\partial H}{\partial q}\right|_*
\eeqs
However, from the defining equation for $\mathcal{E}_q$ given above, it is seen that $\mathcal{E}_{qH} = - H / I_H^2 I_{Hq}$, where $I_{Hq} > 0$ by inspection of Eqn.\ \ref{eq:IH}. Thus, the sign of $\left.\mathcal{E}_{qq}\right|_*$ equals the sign of $\left.\frac{\partial H}{\partial q}\right|_*$. 
\end{proof}

\begin{corollary}
\label{corrolary:X}
For an increasing angular momentum $H$, a free relative equilibria that ends in a Termination Fission is always unstable in the degree of freedom $q$. Conversely, a free relative equilibria that emanates from a Transition Fission is always stable in the degree of freedom $q$. 
\end{corollary}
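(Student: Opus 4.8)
The plan is to reduce the statement entirely to Lemma~\ref{theorem:X}, reading off the sign of $\partial H/\partial q$ from the geometry of the two fission types. I would begin by fixing the picture near a fission point. Let $q$ be the degree of freedom along which the constraint $q \ge D$ acts (here $D = r_i + r_j$ for the contacting pair), and parameterize the free branch of relative equilibria by $q$, assuming as in Lemma~\ref{theorem:X} that $q$ stands in a $1$--$1$ relation with $H$ while the remaining degrees of freedom stay in equilibrium. On the constrained branch the equilibrium condition of Lemma~\ref{lemma:3} reads $\mathcal{E}_q|_{q=D} = -\tfrac{H^2}{2 I_H^2} I_{Hq} + \mathcal{U}_q \ge 0$, whereas on the free branch $\mathcal{E}_q = 0$ identically. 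The fission value $H^*$ is precisely the angular momentum at which the constrained branch attains $\mathcal{E}_q = 0$, so the intersecting free branch emanates from the point $(q,H) = (D, H^*)$ into the physically admissible region $q > D$.

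Next I would pin down which side of $H^*$ the free branch occupies for each fission type, since this is what fixes the sign of $\partial H/\partial q$. At a fixed contact configuration $q = D$ the only $H$-dependence in $\mathcal{E}_q|_{q=D}$ is the explicit centrifugal term, whose $H$-derivative $-\tfrac{H}{I_H^2} I_{Hq}$ is strictly negative for $H > 0$ because $I_{Hq} > 0$ (Eqn.~\ref{eq:IH}). Hence $\mathcal{E}_q|_{q=D}$ decreases monotonically through zero at $H^*$, so the constrained configuration is a genuine relative equilibrium (stable in $q$ by Lemma~\ref{lemma:3}) exactly for $H < H^*$ and fails for $H > H^*$. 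By the definition of a transition fission the family continues with the constraint inactive for $H > H^*$, so the free branch attaches on the high-$H$ side: as $q$ increases away from $D$, $H$ increases away from $H^*$, giving $\partial H/\partial q > 0$. By the definition of a termination fission the family instead disappears for $H > H^*$; continuity of the equilibrium set then forces the intersecting free branch to coexist with the constrained branch on the low-$H$ side and annihilate with it at $H^*$, so that $q$ increases away from $D$ as $H$ decreases from $H^*$, giving $\partial H/\partial q < 0$.

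With the sign of $\partial H/\partial q$ in hand, the conclusion is immediate from Lemma~\ref{theorem:X}, namely $\mathrm{sign}(\mathcal{E}_{qq}) = \mathrm{sign}(\partial H/\partial q)$. Thus a branch emanating from a transition fission has $\mathcal{E}_{qq} > 0$ and is stable in $q$ (hence possibly stable, subject to the remaining Hessian block of Lemma~\ref{lemma:3}), while a branch terminating in a termination fission has $\mathcal{E}_{qq} < 0$ and is unstable in $q$, as claimed.

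The step I expect to be the main obstacle is the middle one: rigorously tying the qualitative definitional phrases ``disappear at higher $H$'' and ``continues at higher $H$'' to the monotone direction of $q(H)$ along the free branch. This needs the observation that $\mathcal{E}_q|_{q=D}$ is monotone in $H$, so that the constrained branch is valid on exactly one side of $H^*$, together with a continuity/transversality argument ensuring the free branch attaches to the constrained branch at $(D, H^*)$ and does not fold back in such a way that the two sit on the same side of $H^*$ while the family still persists past it. One should also verify that the $1$--$1$ hypothesis of Lemma~\ref{theorem:X} holds generically at fission, i.e.\ that no other degree of freedom is simultaneously varying, so that the lemma applies without modification.
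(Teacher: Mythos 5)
Your proposal is correct and takes essentially the same route as the paper: both reduce the corollary to Lemma~\ref{theorem:X} by determining the sign of $\partial H/\partial q$ along the free branch from the type of fission. The only difference is that the paper treats the direction of the branch (toward the constraint under increasing $H$ for a Termination Fission, away from it for a Transition Fission) as immediate from the definitions, whereas you additionally derive it from the monotonicity of $\mathcal{E}_q$ at contact as a function of $H$ together with continuity of the branch at the fission point --- a harmless strengthening of the same argument.
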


\begin{proof}
Assume the active constraint is defined to be $q=0$. By definition, a Termination Fission occurs when a relative equilibrium at $q^* > 0$ moves towards the general constraint $q = 0$ under increasing angular momentum. Thus $\partial H / \partial q < 0$ and from Lemma \ref{theorem:X} the relative equilibrium is unstable. Conversely, a Transition Fission occurs when a relative equilibrium at $q^* \ge 0$ moves away from the general constraint $q = 0$ under increasing angular momentum. Thus $\partial H / \partial q > 0$ and from Lemma \ref{theorem:X} the relative equilibrium is energetically stable in the degree of freedom $q$, although it may be unstable in other degrees of freedom. 
\end{proof}

Finally, we end with a Lemma on the rotation axis that a stable configuration must have.

\begin{lemma}
Any relative equilibrium configuration not rotating about the maximum moment of inertia of the body will be energetically unstable. 
\end{lemma}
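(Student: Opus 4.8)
The plan is to exploit the fact that, among all the coordinates entering the amended potential $\mathcal{E} = \frac{H^2}{2 I_H} + \mathcal{U}$, the two angles describing the tilt of the body frame relative to the fixed angular momentum direction $\hat{\bfm{H}}$ enter \emph{only} through $I_H$: for spheres the gravitational potential $\mathcal{U}$ and the spin contribution $I_S$ are both independent of orientation. So I would first isolate these orientation degrees of freedom and observe that, at fixed internal configuration $\bfm{q}$ and fixed $H$, the entire variation of $\mathcal{E}$ under a tilt is controlled by how $I_H = \hat{\bfm{H}}\cdot\bfm{I}\cdot\hat{\bfm{H}}$ varies.

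Next I would parametrize $\hat{\bfm{H}}$ by its direction cosines $(\alpha,\beta,\gamma)$ relative to the principal axes of $\bfm{I}$, ordering the principal moments as $A \le B \le C$, so that $I_H = A\alpha^2 + B\beta^2 + C\gamma^2$ on the unit sphere $\alpha^2+\beta^2+\gamma^2 = 1$. The three principal axes are precisely the critical points of $I_H$ on this sphere, which is exactly why a relative equilibrium must spin about a principal axis (the first-order conditions of Lemma \ref{lemma:3} for the orientation variables hold only there). Eliminating one direction cosine with the constraint and expanding to second order about each axis, I would obtain, about the maximum-inertia axis, $I_H = C - (C-A)\alpha^2 - (C-B)\beta^2$ (a strict local maximum of $I_H$); about the minimum axis, $I_H = A + (B-A)\beta^2 + (C-A)\gamma^2$ (a strict local minimum); and about the intermediate axis, $I_H = B - (B-A)\alpha^2 + (C-B)\gamma^2$ (a saddle).

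The conclusion then follows from the monotone dependence of $\mathcal{E}$ on $I_H$: since $\partial\mathcal{E}/\partial I_H = -H^2/(2 I_H^2) < 0$, the rotational term $H^2/(2 I_H)$ is a strict local \emph{minimum} in the orientation variables exactly when $I_H$ is a strict local \emph{maximum}, that is, only at the maximum-inertia axis. For a configuration spinning about the minimum axis, any tilt strictly increases $I_H$ and hence strictly decreases $\mathcal{E}$; about the intermediate axis the $\gamma$-direction does the same. In either case I exhibit a single admissible variation — a pure tilt holding $\bfm{q}$ fixed — along which the second variation of $\mathcal{E}$ is negative, so the orientation block of the Hessian fails to be positive definite. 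By the stability criterion of Lemma \ref{lemma:3} this alone certifies energetic instability, and no analysis of the coupling to the internal coordinates $\bfm{q}$ is required.

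The main subtlety I would want to nail down is the legitimacy of treating the tilt as a free, unconstrained degree of freedom whose gradient genuinely vanishes at the equilibrium: I must confirm that spinning about a principal axis makes $\mathcal{E}$ orientation-stationary (so the state is a bona fide relative equilibrium to begin with) and that no rigid-body constraint obstructs the tilt, so that the descent direction I construct is an allowed variation rather than one pointing into an active constraint. Once that is settled, exhibiting one negative direction in a free block of the Hessian is decisive, and the quadratic forms above make the sign bookkeeping routine.
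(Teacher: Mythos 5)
Your proof is correct and takes essentially the same route as the paper's: freeze the internal configuration, note that for spheres the orientation enters $\mathcal{E}$ only through $I_H$, and conclude instability because rotation about the minimum or intermediate axis is not a local minimum of the rotational energy $H^2/(2 I_H)$ over the tilt directions. The only difference is cosmetic — the paper cites the classical Poinsot construction for the maximum/saddle character of those axes, while you derive it explicitly from the quadratic form $I_H = A\alpha^2 + B\beta^2 + C\gamma^2$ on the unit sphere together with the monotone decrease of $\mathcal{E}$ in $I_H$, which makes the same argument self-contained.
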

\begin{proof}
If a body is in a relative equilibria it must rotate about a principal moment of inertia. It can then be treated as a rigid body, at least up to first order variations in its internal configuration and inertial orientation. If it is not rotating about its maximum moment of inertia, it must be rotating about its intermediate or minimum moment of inertia.  The relevant total energy of the function system then equals $H^2 / (2 I_i)$ where $I_i$ is a principal moment of inertia (here ignoring internal variations). From the classical Poinsot construction the body will be at a saddle point of the energy function if rotating about the intermediate axis and will be at a local maximum of the energy function for rotating about the minimum axis. In either case, the rigid body rotation is not stable in the energetic sense as it can depart from this rotation axis while conserving energy with an increase in kinetic energy. 
\end{proof}
Due to this result, we only consider rotation about the maximum moment of inertia, which will always lie in the plane containing the three bodies. In the degenerate case where the bodies are in a line, the system will rotate perpendicular to its line of symmetry. 

With these Definitions, Lemmas and Corollaries stated, the relative equilibria and stability of the Full 3-body problem can be established.

\section{Existence, Stability and Bifurcation of Relative Equilibria}

In this section, having stated the theorem and developed the necessary background, the detailed proof of Theorem 1 is now given. 

\begin{proof}

To systematically explore the existence, stability and bifurcation of the relative equilibria the systems with different numbers of degree of freedom constraints activated and conditions for these to be released are considered separately. The discussion starts with all three DOF constraints activated and progressing to fewer and fewer until all degrees of freedom are not constrained. The Appendix contains the detailed partial derivatives and variation conditions of the amended potential needed for the following discussions. 

\subsection{Three Active Constraints: Lagrange Resting Configurations}

\paragraph{Existence:}
For the three constraints to be active requires that $d_{ij} = r_i + r_j = 1 - r_k$ for all of the indices. This configuration can only occur when the three bodies are mutually resting on each other. 
The relative angle between adjacent grains are then defined by
\beq
	\cos\theta_{ki} & = & \frac{ (1-r_k)^2 + (1-r_i)^2 - (1-r_j)^2 }{ 2 (1-r_k) (1-r_i) } \\
	\sin\theta_{ki} & = & \frac{\sqrt{ (1-r_k)^2(r_k-r_i r_j) + (1-r_i)^2(r_i-r_j r_k) + (1-r_j)^2(r_j-r_k r_i)} }{\sqrt{2} (1-r_k) (1-r_i)} 
\eeq
where $k, i$ take on all possible values. 
The corresponding values of $I_H$ and $\mathcal{U}$ in this configuration are
\beq
	I_H & = & m_i m_j (1-r_k)^2 + m_j m_k (1-r_i)^2 + m_k m_i (1-r_j)^2 + I_S \\
	\mathcal{U} & = & - \left[ \frac{m_i m_j}{1-r_k} + \frac{m_j m_k}{1-r_i} + \frac{m_k m_i}{1-r_j} \right]
\eeq
There are two unique orderings of the resting configuration, mirroring the orbital Lagrange configuration, which results in two distinct relative equilibria. Due to this these configurations are called the Lagrange Resting (LR) configurations. 

\paragraph{Stability:}
As this is the minimum distance for each of these bodies to achieve, this also implies that the potential energy will be minimized at this configuration. From this it can immediately be concluded that for $H=0$ this particular resting configuration is the minimum energy configuration of the system and hence is stable. 

\paragraph{Bifurcation:}
As $H$ increases from zero this system should exist as a relative equilibrium for some range of $H$, but to discover the precise range when this holds requires that the transition from three to two active constraints be investigated. Thus, as angular momentum is increased, conditions for when one of these constraints is no longer enforced is sought, meaning that one of the degrees of freedom will have an allowable variation that decreases the energy. For this configuration each of the three distances can be tested in turn to see which will lose positivity first. For the condition tested, consider the angle variation $\delta\theta_{ki} \ge 0$, keeping the other two constraints $\delta d_{ij} = \delta d_{jk} = 0$. The condition for existence (and stability) of this configuration then becomes $\delta_{\theta_{ki}}\mathcal{E} \ge 0$. Evaluating this explicitly and substituting for the equal resting conditions yields
\beq
	\delta_{\theta_{ki}} {\cal E} & = & m_k m_i \left[ - \frac{H^2}{I_H^2} + \frac{1}{(1-r_j)^3} \right] (1-r_k) (1-r_i) \sin\theta_{ki} \delta \theta_{ki}  \label{eq:3bp3}
\eeq
and substituting in for $\sin\theta_{ki}$ yields
\beq
	\delta_{\theta_{ki}} {\cal E} & = & \frac{m_k m_i}{\sqrt{2}} \left[ - \frac{H^2}{I_H^2} + \frac{1}{(1-r_j)^3} \right] \times  \\
	& & \sqrt{ (1-r_k)^2(r_k-r_i r_j) + (1-r_i)^2(r_i-r_j r_k) + (1-r_j)^2(r_j-r_k r_i)}   \delta \theta_{ki}\nonumber
\eeq
Changing $k,i$ for $i,j$ and $j,k$ only changes the items on the first line, and thus the controlling condition for the existence and stability of the Lagrange Resting configurations is 
\beq
	\frac{1}{(1-r_j)^3} & > & \frac{H^2}{I_H^2}  
\eeq
which must hold for $j = 1,2,3$. Thus the minimum value of $r_j$ gives the minimum value of $H$ for the inequality to be violated. For the specified definitions this means that $j=3$ and the loss of stability occurs about the angle $\theta_{12}$, meaning that the Lagrange Resting configuration will undergo a Termination Fission by losing contact between its two largest bodies, pivoting about the smallest grain (see Fig. \ref{fig:bif2}). 

\subsection{Two Active Constraints}

In this case two bodies rest on each other, but do not have the third contact active. A convenient way to express this is to have the two distances at their minimum value and leave the angle free, or $d_{ij} = 1 - r_k$, $d_{jk} = 1 - r_i$ with $\theta_{ki}$  only constrained by the resting limit, $d_{ki} \ge 1 - r_j$. For the moment assume that $\mathcal{E}_{d_{ij}} > 0$ and $\mathcal{E}_{d_{jk}} > 0$ (this will be checked later), and thus there is only one degree of freedom to be concerned with. Taking the first variation and substituting for the distances yields 
\beq
	\delta_{\theta_{ki}} {\cal E} & = & m_k m_i \left[ - \frac{H^2}{I_H^2} + \frac{1}{d_{ki}^3} \right] (1-r_k)(1-r_i) \sin\theta_{ki} \delta \theta_{ki}  
\eeq
which must now be identically equal to zero for the system to be in equilibrium. There are two possibilities, $\sin\theta_{ki} = 0$ or $- \frac{H^2}{I_H^2} + \frac{1}{d_{ki}^3} = 0$. Both can occur and are discussed separately, the former is called the Euler Resting configuration and the latter the Transitional Resting configuration. No assumptions about the ordering of the bodies in terms of mass, unless specified.

Each case must be tested for when the configurations cease to exist, which will occur once one of the energy variations in the active distance constraints equals zero. These will be explicitly tested for each case to determine conditions at which these equilibria no longer exist. 

\subsubsection{Euler Resting Configurations}

\paragraph{Existence:}
First consider the case when $\theta_{ki} = \pi$, noting that the angle cannot equal zero due to the finite radius constraints. Then the first variation is identically equal to zero and the bodies rest on a straight line with the ordering $i, j, k$, the system forming a relative equilibrium. These are notationally denoted as ER$ijk$, noting that configuration ER$kji$ is considered to be equivalent. Now the moment of inertia and potential energy take on the values
\beq
	I_H & = & m_i m_j (1-r_k)^2 + m_j m_k (1-r_i)^2 + m_k m_i (1+r_j)^2 + I_S \\
	\mathcal{U} & = & - \left[ \frac{m_i m_j}{1-r_k} + \frac{m_j m_k}{1-r_i} + \frac{m_k m_i}{1+r_j} \right]
\eeq
with the main difference from the Lagrange Resting (LR) configurations being that the distance $d_{ki} = 1+r_j$ due to the elongate geometry. 

\paragraph{Stability:}
Under the assumption that the two distance variations are both positive (which is true for a low enough value of $H$), the stability of this relative equilibrium can be analyzed by computing the second order variation evaluated at the resting configuration. 
\beq
	\delta_{\theta_{ki}\theta_{ki}} {\cal E} & = & - m_k m_i \left[ - \frac{H^2}{I_H^2} + \frac{1}{(1+r_j)^3} \right] (1-r_k) (1-r_i) \delta \theta_{ki}^2  \label{eq:RE_thetasq}
\eeq

Stability of this configuration occurs when $\delta_{\theta_{ki}\theta_{ki}} {\cal E} > 0$ which places a lower limit on the angular momentum for stability
\beq
	\frac{I_H^2}{(1+r_j)^3} & < & H^2
\eeq
Note that the value of angular momentum is lower than the angular momentum at which the LR configurations cease to exist. Also, the stability transition occurs when the Transitional Resting configuration conditions are satisfied for the same configuration, indicating that a bifurcation occurs. 

\paragraph{Bifurcation:}
For lower values of angular momentum the Euler Resting configuration exists, but is unstable and mimics an inverted pendulum. When the stability condition is satisfied, the system mimics a hanging pendulum and will remain stable until one of the energy distance variations becomes zero, indicating a transition from two active constraints to a single active constraint. To probe when this occurs, substitute the equilibrium condition into Eqn.\ \ref{eq:3bpij} to find
\beq
	\delta_{d_{ij}} {\cal E} & = & m_i \left\{ m_j \left[ - \frac{H^2}{I_H^2} + \frac{1}{(1-r_k)^3} \right] (1-r_k) \right. \nonumber \\
	& & \left. + m_k \left[ - \frac{H^2}{I_H^2} + \frac{1}{(1+r_j)^3} \right] (1+r_j) \right\} \delta d_{ij} 
\eeq
Setting this to be greater than or equal to zero defines when the ER relative equilibrium configuration exists, and can be solved for as a condition on angular momentum
\beq
	\left[ m_j (1-r_k) + m_k (1+r_j) \right]  \frac{H^2}{I_H^2} & \le & m_j \frac{1}{(1-r_k)^2}   + m_k \frac{1}{(1+r_j)^2}  
\eeq
The precise value of $H$ when this is first violated is discussed in a later section. For the current analysis it suffices to note that this inequality is always satisfied when the ER configurations first become stable. Substituting $(H/I_H)^2 = 1 / (1+r_j)^3$ and simplifying yields  
\beq
	0 & \le & r_j + r_k    
\eeq
which is trivially satisfied for any $j$ or $k$. It is also clear that a large enough $H$ will always be able to violate the existence condition. Generically, one of the two bodies $i$ or $k$ will separate from $j$, leaving the other body in contact and transitioning the configuration into the Euler Aligned configuration. 

\subsubsection{Transitional Resting Configurations}
\paragraph{Existence:}
When the Euler Resting (ER) configurations becomes stable, a pair of solutions that satisfy the second equilibrium condition bifurcate from or into the resting configuration. The condition in general is $H^2 = I_H^2 / d_{ki}^3$, but now the moment of inertia $I_H$ becomes a function of $\theta_{ki}$ and must change with $H$ to maintain this condition. There are two branches, $\theta_{ki} > \pi$ and $\theta_{ki} < \pi$, and these give two different orderings of the configuration -- ultimately corresponding to the two different orientations of several of the equilibrium configurations. 
%Due to the finite radius limits, there are strict inequalities on the angular momentum when these configurations can occur. 
%\beq
%	\frac{I_H}{(1+r_j)^{3/2}} & \le H \le & \frac{I_H}{(1-r_j)^{3/2}}
%\eeq
%This means that the actual distance between grains $i$ and $k$ can be adjusted to satisfy the equation $(H/I_H)^2 = 1/d_{ki}^3$ while the grains continue to contact the middle grain $j$. 
The moment of inertia and potential energy now take on the more generalized form 
\beq
	I_H & = & m_i m_j (1-r_k)^2 + m_j m_k (1-r_i)^2 + m_k m_i d_{ki}^2 + I_S \\
	\mathcal{U} & = & - \left[ \frac{m_i m_j}{1-r_k} + \frac{m_j m_k}{1-r_i} + \frac{m_k m_i}{d_{ki}} \right] \\
	d_{ki}^2 & = & (1-r_k)^2 + (1-r_i)^2 - 2 (1-r_k)(1-r_i)\cos\theta_{ki} 
\eeq

\paragraph{Stability:}
Evaluating the second variation of the energy with respect to $\theta_{ki}$ yields 
\beq
	\delta^2_{\theta_{ki}}{\cal E} & = & m_k m_i \left[ 4 m_i m_k d_{ki}^2 - 3 I_H \right] \frac{(d_{ij} d_{jk} \sin\theta_{ki})^2}{I_H d_{ki}^5} \left(\delta\theta_{ki}\right)^2
\eeq
Stability, when the configuration exists, then hinges on the sign of $4 m_i m_k d_{ki}^2 - 3 I_H$. Making the substitution from Eqn.\ \ref{eq:massnorm} the stability condition can be reduced to
\beq
	d_{ki}^2 & > & \frac{3}{r_i^3 r_k^3} \left[ \frac{2}{5}\left(r_i^3+r_j^3+r_k^3\right)\left(r_i^5+r_j^5+r_k^5\right) +
		r_i^3 r_j^3 (1-r_k)^2 + r_j^3 r_k^3 (1-r_i)^2 \right] 
		\label{eq:TRstab}
\eeq
where $1-r_j \le d_{ki} \le 1+r_j$. 

Note that the equilibrium configuration does not necessarily exist across this entire range of mutual distances.  
Specifically, the distance variation conditions must be verified for the configuration to exist. Substituting the equilibrium condition into Eqn.\ \ref{eq:3bpij} then yields the existence condition (after simplification)
\beq
	\frac{H^2}{I_H^2} & \le & \frac{1}{(1-r_k)^3}  
\eeq
where $k$ is the radius of either of the outer resting bodies. Note that the transitional resting configurations will always exist over some interval of angular momentum, as substituting the initial bifurcation conditions of $(H/I_H)^2 = 1 / (1+r_j)^3$ can be trivially shown to satisfy the above existence condition. 
Again, note that $H$ can also always be chosen large enough for the existence condition to be violated. 
There are three different possible situations to cover, investigated in detail below.

\paragraph{$i=1,j=2,k=3$} For this sequence the Transitional Resting equilibrium are unstable and migrate from the ER123 configuration (which they stabilize upon bifurcation from it) to the distance $d_{31} = 1-r_3$, at which point body 1 separates from the system. To determine instability evaluate Eqn.\ \ref{eq:TRstab} over the entire range of radius values and verify that it is never satisfied. 
To see this consider the contact conditions from Eqns.\ \ref{eq:3bpij} and \ref{eq:3bpjk}. For these conditions to hold both must be greater than or equal to zero for a positive variation in the mutual distance $d_{ij}$ and $d_{jk}$. Substituting the equilibrium condition $(H/I_H)^2 = 1/d_{31}^3$ and simplifying, the condition for existence of the TR123 configuration is that both
\beq
	d_{31} & \ge & d_{12} \\
	d_{31} & \ge & d_{23}
\eeq
For the current configuration, $d_{12} = 1-r_3 > d_{23} = 1-r_1$. Thus the controlling condition is $d_{31} \ge 1 - r_3$. Now note that $d_{31} \ge 1-r_2$ and that $1-r_3 > 1-r_2$, thus this inequality is violated prior to the TR123 configuration reaching the LR configuration, and as noted occurs once $d_{31} = 1-r_3$. See Fig.\ \ref{fig:bif1} for the evolutionary path for this situation. 

\paragraph{$i=3,j=1,k=2$} For this sequence the Transitional Resting equilibrium are unstable (determined as before) and migrate from the ER312 configuration (which they stabilize upon bifurcation from it) to the distance $d_{23} = 1-r_3$ when body 2 separates from the system. Similar to above, the condition for existence of the TR312 configuration is that 
\beq
	d_{23} & \ge & d_{31} \\
	d_{23} & \ge & d_{12}
\eeq
For the current configuration, $d_{12} = 1-r_3 > d_{31} = 1-r_2$. Thus the controlling condition is $d_{23} \ge 1 - r_3$. Now note that $d_{23} \ge 1-r_1$ and that $1-r_3 > 1-r_1$, thus this inequality is violated prior to the TR312 configuration reaching the LR configuration, and as noted occurs once $d_{23} = 1-r_3$ again.  See Fig.\ \ref{fig:bif3} for the evolutionary path for this situation. 

\paragraph{$i=1,j=3,k=2$} For this sequence the Transitional Resting equilibria exist across the range of radius limits, going from ER132 to Lagrange Resting configurations. For this configuration there are ranges of parameters for which there are stable relative equilibria. To identify these regions compare the upper inequality limit to when the distance for stability is less than the maximum distance $1+r_j$. Plotting out this region delineates the small oval region in Fig.\ \ref{fig:TR132}. For parameter values within this region the evolution of the TR132 configuration becomes more complex. Specifically, the angular momentum profile in this region is such that there are two relative equilibria defined at a given level of angular momentum, one towards the LR configuration (which is always at a local maximum of the energy and thus is unstable) and one towards the ER132 configuration (which becomes a local minimum of the energy and thus is stable). Figure \ref{fig:bif2} shows the two different pathways that can occur. 

\paragraph{Bifurcation:}
For the TR configurations which are always unstable, as the angular momentum is increased they first bifurcate into existence by stabilizing the ER configurations. Then as $H$ is increased they migrate towards more compact configurations. The TR123 and TR312 configurations then end with one of the bodies separating from the other two. The TR132 configuration migrates all the way to the LR configuration and destabilizes it, thus terminating both the LR and TR132 configurations. 

For TR configurations that can be stable, indicated in Fig.\ \ref{fig:TR132}, the sequence is different. Here as $H$ is increased an $H$-Bifurcation occurs at an angle between the minimum (or maximum) constrained value of $\theta_{12}$ and $\pi$. To show this consider the angular momentum as a function of distance $d_{12}$, $H = I_H(d_{12}) / d_{12}^{3/2}$. Taking the partial of this with respect to $d_{12}$ shows that there is a zero in the interval $1-r_3 \le d_{12} \le 1+r_3$ (meaning that $H$ takes on an extreme value) whenever the stability condition in Eqn.\ \ref{eq:TRstab} is satisfied. Further, taking the second partial of $H$ and substituting the equilibrium condition shows that this is always positive, meaning that $H$ takes on a minimum value in the interval. Thus, as $H$ is increased the two equilibria exist on either side of the minimum, with no other equilibria emerging due to the definiteness of $H$ as a function of $d_{12}$. From Lemma \ref{theorem:X} the equilibria that moves down to the LR configuration must be unstable, and thus the equilibria that moves toward the ER configuration is stable (this also agrees with the condition as formulated in Eqn.\ \ref{eq:TRstab}). 

Thus the unstable TR132 configuration continues down to the LR configuration and terminates it. The stable TR132 configuration moves toward the ER132 configuration and terminates there, stabilizing the ER132 configuration. The existence of these stable TR132 configurations is unexpected and breaks the symmetry otherwise seen in these configurations. The region where these occur correspond to grains with a nearly equal $r_1$ and $r_2$, with $r_3$ neither close to zero or to the size of the other grains. 

\begin{figure}[h!]
\centering
\includegraphics[scale=0.3]{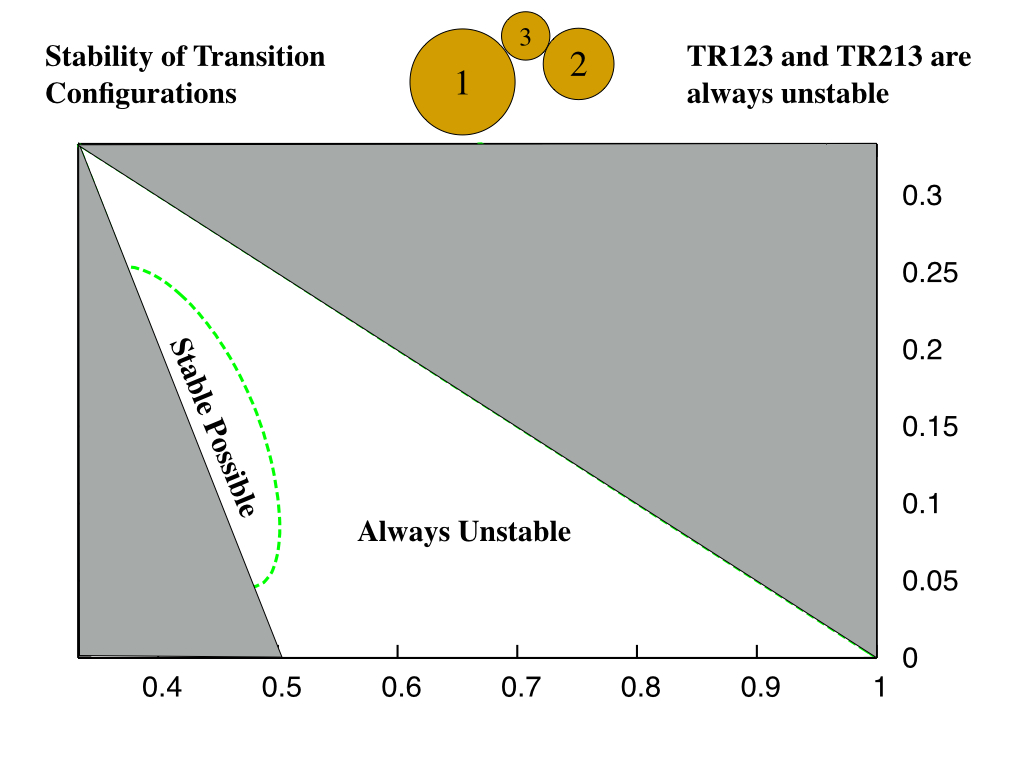}
\caption{Region where stable TR132 configurations can lie.}
\label{fig:TR132}
\end{figure}

\subsection{One Active Constraint}

Now consider relative equilibria when there is a single active constraint. In this configuration two of the bodies rest on each other, say $i$ and $j$ and thus $d_{ij}=1-r_k$, and the third body is located by the distance $d_{jk}$ and by either $d_{ki}$ or the angle $\theta_{ki}$. There are two classes of relative equilibrium solutions in this class, with the two bodies in contact either being aligned with the third body, or with their line of contact being orthogonal to the third body. The former are called the Euler Aligned configurations and the latter the transverse, or Isoceles, configurations -- the terminology arising due to the structure that these make. For these structures there are two limiting cases, one where the final active constraint separates and the other where one of the free constraints becomes activated. The former occurs when the single active constraint configurations intersect with the orbital configurations. The latter occurs when it intersects with a double active configuration. These two classes of configurations are discussed in turn. 

First it can be established that these are the only relative equilibrium configurations. Consider Eqns.\ \ref{eq:3bpjk} and \ref{eq:3bpki} in the Appendix, which both must equal zero. There are two possibilities for Eqn.\ \ref{eq:3bpki} to equal zero, either $\sin\theta_{ki} = 0$ or $H = I_H / d_{ki}^{3/2}$. Consider $\theta_{ki} = \pi$, as setting the angle to 0 is equivalent to a reordering of the bodies. Then for Eqn.\ \ref{eq:3bpjk} to equal zero the condition becomes $m_j \left( H^2/I_H^2 - 1 / d_{jk}^3\right) d_{jk} + m_i \left( H^2/I_H^2 - 1 / d_{ki}^3\right) d_{ki} = 0$. In this configuration $d_{ki} > d_{jk}$ and thus along this configuration it can never occur that $H^2 / I_H^2 = 1/d_{ki}^3$, meaning that this condition will not intersect with the $\theta_{ki} = \pi$ configuration. If $H$ is chosen such that $ I_H / d_{ki}^{3/2} < H < I_H / d_{jk}^{3/2}$ it is possible for the second condition to be satisfied, which is explored in more detail below.

The alternate condition to consider is $H^2 / I_H^2 = 1/d_{ki}^3$, with no immediate constraint on $\theta_{ki}$. Then, by substitution into Eqn.\ \ref{eq:3bpjk} yields the condition $H^2 / I_H^2 = 1/d_{jk}^3$, or $d_{ki} = d_{jk} = d$. From Eqn.\ \ref{eq:d31} and with $d_{ij} = 1 - r_k$ the condition on $\theta_{ki}$ becomes 
\beq
	\cos\theta_{ki} & = & \frac{1}{2} \frac{1-r_k}{d}
\eeq
Note that $d$ can always be chosen large enough for $\theta_{ki}$ to be well defined. 

\subsubsection{Isosceles Configurations}
\label{sec:isosceles}

\paragraph{Existence:}
The Isosceles configurations are described by having two grains in contact, nominally $i$ and $j$, the third grain $k$ in non-contact with these grains, and with the line connecting the grains $i$ and $j$ being perpendicular to the line from grain $k$ to the center of mass of grains $i$ and $j$. These are referred to as IS$ij$-$k$, with the first two indices indicating the grains in contact and the separated third index the separated grain. In terms of Eqns.\ \ref{eq:3bpij}, \ref{eq:3bpjk} and \ref{eq:3bpki}, set $d_{ij} = 1-r_k$ and $d_{jk} = d_{ki} = d$, forming an Isosceles triangle. The equilibrium condition is then simply stated as
\beq
	\frac{H^2}{I_H^2} & = & \frac{1}{d^3} \\
	I_H & = & m_i m_j (1-r_k)^2 + (m_j m_k + m_k m_i) d^2 + I_S
\eeq
Making this substitution, see that $\delta_{d_{jk}}{\cal E} = \delta_{\theta_{ki}}{\cal E} = 0$. So long as $d_{jk} \ge 1-r_i$ and $d_{ki} \ge 1-r_j$, the remaining condition for this equilibrium to be satisfied is $	\delta_{d_{ij}}{\cal E} \ge 0$, which can be simplified to the condition
\beq
	d = d_{ki} = d_{kj} & \ge & 1-r_k
\eeq

Now consider the existence of each of the possible combinations, in turn. 

\paragraph{IS12-3}
Here the grains in contact are separated by a distance $1-r_3$ and the controlling distance of the equal legs of the triangle will be $d_{31} = 1-r_2$. Note that $1-r_3 > 1-r_2$, and thus the above existence condition will be violated when $d_{31} = d_{23} = 1-r_3$, and in fact the three grains will lie at the vertices of an equilateral triangle. This condition corresponds to the intersection of IS12-3 with the orbital Lagrange configuration, LO, and terminates the IS12-3 configuration, without having grain 3 contacting the other two grains. This sequence is isolated from the previous  configurations as the three grains never come into contact and is shown in Fig.\ \ref{fig:bif4}. 

\paragraph{IS23-1}
Now the grains in contact are separated by a distance $1-r_1$ and the controlling distance of the equal legs of the triangle will be $d_{12} = 1-r_3$. Now as $1-r_3 > 1-r_1$, grains 1 and 2 will touch prior to the grains 2 and 3 separating. Once grains 1 and 2 touch the configuration matches the end-state configuration of the TR123 configuration. Thus the TR123 and IS23-1 configurations terminates, as shown in Fig.\ \ref{fig:bif1}. 

\paragraph{IS31-2} 
Now the grains in contact are separated by a distance $1-r_2$ and the controlling distance of the equal legs of the triangle will be $d_{12} = 1-r_3$ again.  Similar to before, $1-r_3 > 1-r_2$, so grains 1 and 2 will touch prior to the grains 1 and 3 separating. Once grains 1 and 2 touch the configuration matches the end-state configuration of the TR312 configuration. Thus the TR312 and IS31-2 configurations terminate, as shown in Fig.\ \ref{fig:bif3}. 

\paragraph{Stability:}
Now consider the stability of the IS configurations. The condition $\delta_{d_{ij}}{\cal E} \ge 0$ is uniformly satisfied, except for the termination of the IS12-3 configuration noted above. Thus it is just needed to test whether the joint variations of $\delta d_{jk}$ and $\delta \theta_{ki}$ are positive definite or not. For this situation one must take the second partial of the energy with respect to both of these variations, evaluated at the relative equilibrium, and test the 2$\times$2 resulting matrix for whether it is positive definite.  
\beq
	\delta^2{\cal E} & = & \left[ \begin{array}{cc} \delta d_{jk} & \delta\theta_{ki} \end{array} \right] 
		\left[ \begin{array}{cc} \frac{\partial^2{\cal E}}{\partial d_{jk} \partial d_{jk} } & \frac{\partial^2 {\cal E}}{\partial d_{jk} \partial\theta_{ki}} \\
			\frac{\partial^2 {\cal E}}{\partial\theta_{ki}\partial d_{jk} } &  \frac{\partial^2{\cal E}}{\partial\theta_{jk} \partial\theta_{jk} } \end{array} \right] 
			\left[ \begin{array}{c} \delta d_{jk} \\ \delta\theta_{ki}\end{array} \right] 
\eeq
A matrix is positive definite by Sylvester's Criterion if all of its leading principal minors are positive. A simpler, necessary condition, is that the diagonals of the matrix are all positive. 

To that end, consider the term $\frac{\partial^2{\cal E}}{\partial\theta_{jk} \partial\theta_{jk} }$ evaluated at the equilibrium condition, which can be found to equal
\beq
	\frac{\partial^2{\cal E}}{\partial\theta_{jk} \partial\theta_{jk} } & = & m_k m_i \left[ m_k\left(m_i-3m_j\right)d_{ki}^2 - 3 I_S - 3 m_i m_j (1-r_k)^2 \right] \frac{\left(d_{ij}\sin\theta_{ki}\right)^2}{d_{ki}^3 I_H}
\eeq
Note that the ordering of $i$ and $j$ does not matter, although the individual terms of the matrices may change. Thus, one can always choose to assign $i$ and $j$ such that $m_j > m_i$ to ensure that $m_i-3m_j < 0$, making the diagonal negative definite. Thus, any of the configurations will violate the necessary condition for the system to be positive definite, meaning that the Isosceles configurations are always unstable. Note that this instability mode is related to the angle $\theta_{ki}$ and not related to instability in the distance variation. Due to this, the IS family is always unstable even if it is formed from a Transition Fission. 

\paragraph{Bifurcation:}
To end, note that in Figs.\ \ref{fig:bif1}, \ref{fig:bif3} and \ref{fig:bif4} a similar specific sequence for the evolution of all of the Isosceles configurations is shown, with them appearing as an $H$-Bifurcation with one branch continuing to $\infty$ and the other terminating at a TR configuration or ending at an LO configuration. The persistence of this structure can be proven, using a similar approach as used in discussing the bifurcation of the TR132. The relation between angular momentum and distance $d$ in these configurations is the simple expression $H = I_H(d) / d^{3/2}$. It can be shown that this function has a unique minimum positive value, and thus the Isosceles configurations bifurcate into existence when the angular momentum rises above this value. Further, it can be shown that the distance at which this bifurcation occurs is always greater than the associated contact distances for this configuration. This is shown by developing a specific inequality that must be satisfied, and then checking it by computing level sets across the domain of possible radii. Doing so reveals that the bifurcation at a non-zero value of $H$ always occurs away from any of the contact termination conditions. Thus the pattern of having one branch progress towards the TR configurations and the other branch extend to large distances can be inferred. 

\subsubsection{Euler Aligned Relative Equilibria} 
\label{sec:aligned}

\paragraph{Existence:}
The Euler Aligned relative equilibria are defined by having two grains in contact and the third at a distance along the centers of mass of the two grains in contact. Again, the grains in contact are $i$ and $j$ and grain $k$ is separated. The notation for these equilibria is EA$ij$-$k$, where the order is important. Specifically, note that EA$ij$-$k$ and EA$ji$-$k$ are different, with the grain $k$ rotated 180$^\circ$ relative to the other configuration. Thus, EA$ij$-$k$ can be organized from left to right and fits with the earlier notation. There are 6 different configurations that can be considered, EA12-3, EA21-3, EA13-2, EA31-2, EA23-1, EA32-1. In all these definitions the angle $\theta_{ki} = \pi$ and $\delta_{\theta_{ki}}{\cal E} = 0$. The two remaining conditions are then $\delta_{d_{ij}} {\cal E} \ge 0$ and $\delta_{d_{jk}} {\cal E} = 0$.

For existence, solve each of these conditions for the ratio $(H/I_H)^2$ to find
\beq
	\left(\frac{H}{I_H}\right)^2 & \le & \frac{1}{m_j(1-r_k)+m_kd_{ki}} \left[ \frac{m_j}{(1-r_k)^2} + \frac{m_k}{d_{ki}^2} \right] \\
	\left(\frac{H}{I_H}\right)^2 & = & \frac{1}{m_i d_{ki} + m_j d_{jk}} \left[ \frac{m_i}{d_{ki}^2} + \frac{m_j}{d_{jk}^2} \right] 	
	\label{eq:H_EA}
\eeq
Note that $d_{ki} \ge 1 + r_j$ and $d_{jk} = d_{ki} - (1-r_k)$. 
These conditions can be combined and rewritten into a standard form
\beq
	\frac{1}{d_{ki}^3} F( m_j/m_i, d_{jk}/d_{ki} ) & \le & \frac{1}{(1-r_k)^3} F( m_k/m_j, d_{ki}/(1-r_k))
\eeq
where $F(\mu, x) = \left( 1 + \mu / x^2 \right) / \left( 1 + \mu x\right)$. Note the identity $x^3 F(\mu,x) = F(1/\mu, 1/x)$.  When this inequality is violated grains $i$ and $j$ will separate and the configuration will cease to exist. 

As a final step, define $r = d_{ki} / (1-r_k) > 1$, $\mu_{ij} = m_i/m_j$ and $d_{jk}/d_{ki} = 1 - 1/r$. Then the inequalities are written as
\beq
	F( \mu_{ji}, 1 - 1/r) & \le & F(\mu_{jk}, 1/r)
\eeq 
It can be shown (see Appendix) that $F(\mu,r)$ is monotonically decreasing in $r$ and is convex. From this it can be shown that $F( \mu, 1 - 1/r)$ is monotonically decreasing in $r$ and that $F( \mu, 1/r)$ is monotonically increasing. Thus the inequality can be crossed either 0 or 1 times and it is not needed to consider the possibility of multiple transitions in the existence of solutions. Given the well defined interval over which the parameter $r$ is defined, $r \in [(1+r_j)/(1-r_k), \infty)$,  an explicit method for determining when these conditions exist can be developed. 

First note that $\lim_{r\rightarrow\infty} F( \mu, 1 - 1/r) = 1$ and that $\lim_{r\rightarrow\infty}  F(\mu, 1/r) \sim r^2/\mu + \ldots$. Thus the inequality is always satisfied when the distance between the grains in contact, $i$ and $j$, and the separated grain $k$, is large. This holds independent of the ordering of the indices, and means that all of the EA configurations exist when the $k$th grain is sufficiently distant from the two in contact. Thus, to ascertain whether the configuration exists across all possible values of $r$ it is only needed to check the condition at the minimum radius condition. Due to the topological properties of the two functions in the inequality, if the inequality is satisfied at the minimum value of $r$, then the given configuration exists across all distances in the interval. If it is violated at the minimum value of $r$, then there exists a distance at which the configuration ceases to exist. 

Evaluating the inequality at the minimum distance $r = (1+r_j)/(1-r_k)$ yields
\beq
	F( \mu_{ji}, (1-r_i)/(1+r_j) ) & \le & F(\mu_{jk}, (1-r_k)/(1+r_j))
\eeq 
If this inequality is confirmed, then the configuration EA$ij$-$k$ exists across the whole domain and, by swapping indices $i$ and $k$, that then the configuration EA$kj$-$i$ does not exist by definition. Conversely, if the inequality for EA$ij$-$k$ is not confirmed at the lower limit, then the configuration EA$kj$-$i$ is trivially confirmed. The following discussions will assume that the configuration EA$ij$-$k$ exists all the way to contact, and thus that the configuration EA$kj$-$i$ does not and terminates at a finite distance from contact. 

This means that whenever one EA configuration exists down to the Euler Resting configuration, that the alternate EA configuration does not, and terminates at a finite separation. The termination of the conjugate configuration occurs when that configuration intersects with the conditions for the orbital Euler configuration EO$ijk$, as by definition at termination $\delta_{d_{ji}}{\cal E} = 0$ by default and $\delta_{d_{kj}}{\cal E} = 0$ due to the contact constraint vanishing. 

With these results in hand, the realms where the different Euler Aligned configurations exist can be evaluated. To do this, plot the level sets of the function
\beq
	F(\mu_{jk}, (1-r_k)/(1+r_j)) - F( \mu_{ji}, (1-r_i)/(1+r_j) ) & = & 0
\eeq
As these functions are analytical and have no singularities, there are no computational issues with evaluating these level sets. The zero line delineates where a transition in the existence of these configurations occurs. In the region where the difference is positive, the EA$ij$-$k$ configuration exists down to contact, while in the region where the difference is negative, the EA$kj$-$i$ configuration exists down to contact. These distinctions are important as they control which grain will separate from an Euler resting configuration when angular momentum is increased. In the following the plots of these zero lines are displayed for the different possible configurations. 

\paragraph{EA12-3 and EA32-1} 
Figure \ref{fig:EA123} shows a plot of the level set of the inequality for the ordering 123, showing that there exists a region where the EA12-3 configuration exists down to the ER123 configuration, and where the EA32-1 configuration exists down to the ER123 configuration. The former exists in the region where the grains 2 and 3 are more similar sized, and the latter where the grains 1 and 2 are more similar sized. Which side of the line that configuration lies determines how the configuration will fission when it terminates. Figure \ref{fig:bif1} shows the different bifurcation pathways that occur. 

\begin{figure}[h!]
\centering
\includegraphics[scale=0.25]{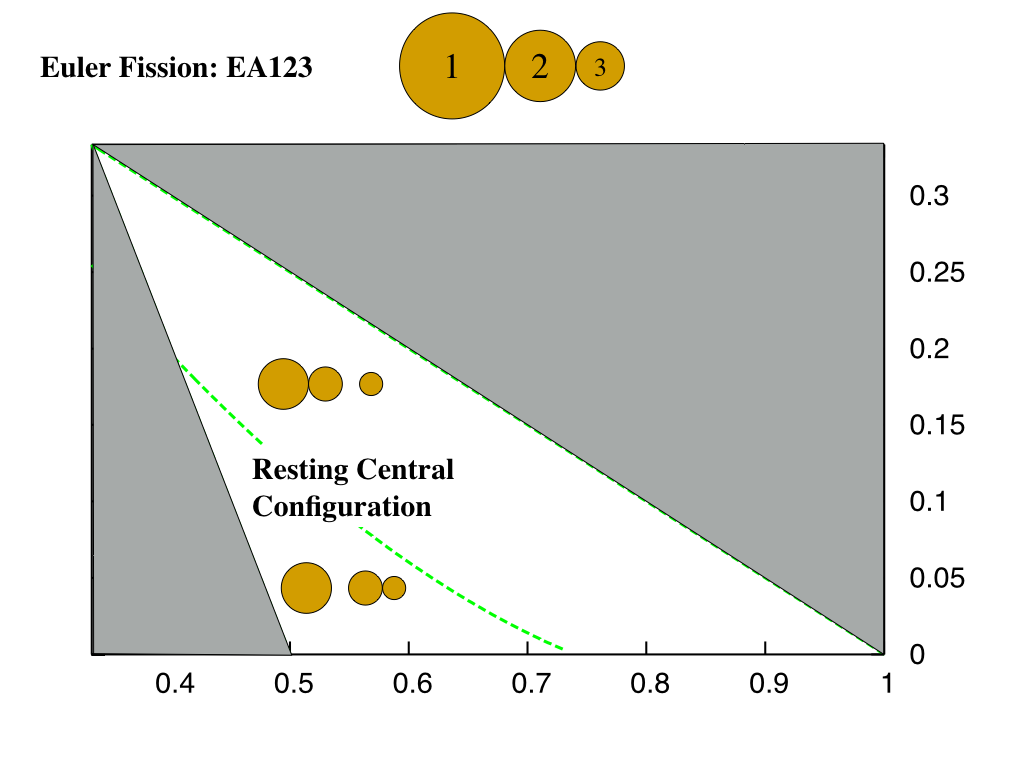}
\caption{Fission chart for the ER123 configuration. For masses to the right of the line ER123 configuration fissions by having the smallest mass separate and transitioning into the EA$12$-$3$ configuration. For masses to the left of the line the ER123 configuration fissions by having the largest mass separate and transitioning into the EA32-1 configuration.}
\label{fig:EA123}
\end{figure}

\paragraph{EA13-2 and EA23-1} 
Figure \ref{fig:EA132} shows a plot of the level set of the inequality for the ordering 132. Here, only the EA13-2 configuration exists down to the ER132 configuration, and thus the EA23-1 configuration always terminates at a finite distance. Not shown here explicitly is that at the left border, where $r_1 = r_2$, the two conditions are equivalent due to symmetry and both EA configurations extend down to the ER132 configuration. Figure \ref{fig:bif2} shows the different bifurcation pathways that occur. 

\begin{figure}[h!]
\centering
\includegraphics[scale=0.25]{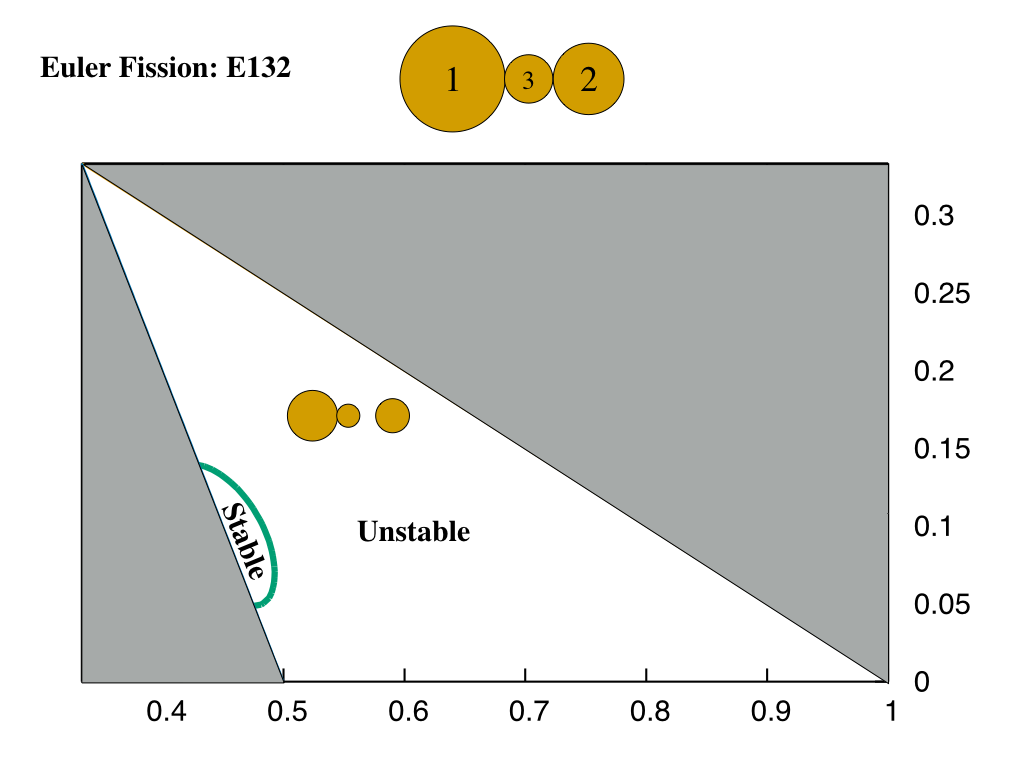}
\caption{Fission chart for the ER132 configuration. For this configuration the intermediate mass always separates, transitioning into the EA13-2 configuration. }
\label{fig:EA132}
\end{figure}

\paragraph{EA31-2 and EA21-3}
Figure \ref{fig:EA312} shows a plot of the level set of the inequality for the ordering 312. There are two regimes again. When the grains are relatively equal in size the configuration EA21-3 continues down to ER312. Away from this geometric region, however, configuration EA31-2 continues down to ER312. Figure \ref{fig:bif3} shows the different bifurcation pathways that occur. 

\begin{figure}[h!]
\centering
\includegraphics[scale=0.25]{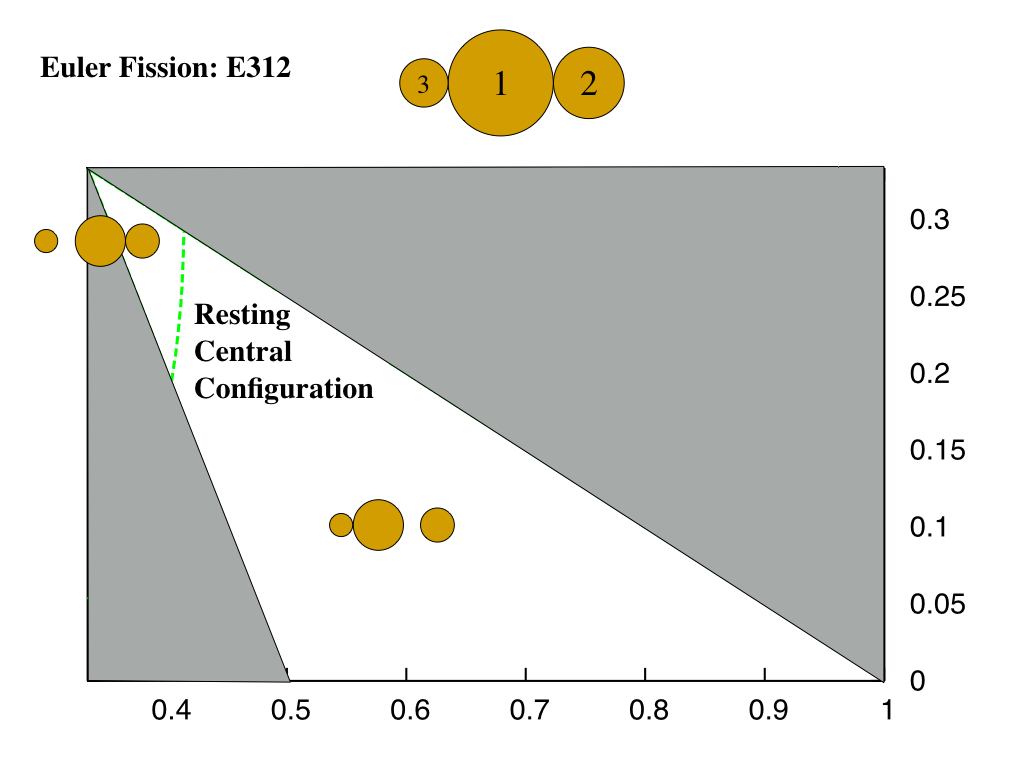}
\caption{Fission chart for the ER312 configuration. For masses to the right of the line the ER312 configuration fissions by having the intermediate mass separate and transitioning into the EA31-2 configuration. For masses to the left of the line the ER312 configuration fissions by having the smallest mass separate and transitioning into the EA21-3 configuration.}
\label{fig:EA312}
\end{figure}

\paragraph{Stability:}

For an EA$ij$-$k$ configuration to be stable requires $\delta_{d_{ij}}{\cal E} > 0$ and the second variations of ${\cal E}$ with respect to $d_{jk}$ and $\theta_{ki}$ be positive definite. The condition on $d_{ij}$ is automatically satisfied, except at specific transition points, once it is shown that a given configuration exists. Thus only the second order variation conditions need to be evaluated. 

First, note that the cross variations $\delta^2_{d_{jk}\theta_{ki}}{\cal E}$ are identically zero, and only consider the second variations $\delta^2_{\theta_{ki}}{\cal E}$ and $\delta^2_{d_{jk}}{\cal E}$ separately. Computing the first of these and evaluating it at the nominal condition yields
\beq
	\delta^2_{\theta_{ki}}{\cal E} & = & - m_k m_i \left[ - \frac{H^2}{I_H^2} + \frac{1}{d_{ki}^3}\right] (1-r_k) \left( d_{ki} - (1-r_k)\right) (\delta\theta_{ki})^2
\eeq

Make the substitution $H^2/I_H^2 = \frac{1}{d_{ki}^3} F( m_j/m_i, 1 - (1-r_k)/d_{ki})$. Then, stability in this variation can be established by showing that
\beq
	F(m_j/m_i, 1 - (1-r_k)/d_{ki}) & > & 1
\eeq
However, the function $F(\mu,1-1/r)$ was shown to be monotonically decreasing in $r$ with the limiting value of 1 as $r$ becomes arbitrarily large. Thus this is always satisfied and the EA configurations are always stable to variations in the angle $\theta_{ki}$. 

All that is left is to consider when $\delta^2_{d_{jk}}{\cal E} > 0$. First, re-express Eqn.\ \ref{eq:3bpjk} as
\beq
	\delta_{d_{jk}} {\cal E} & = & \frac{ m_k}{I_H^2}  \left\{ - H^2\left( m_j + m_i \right) +  I_H^2 \left[  \frac{m_j}{d_{jk}^2}  + m_i  \frac{d_{jk} - d_{ij}\cos\theta_{ki}}{d_{ki}^3} \right]  \right\} \delta d_{jk} 
\eeq
The term inside the brackets is identically zero at equilibrium, thus one does not need to take the variation of terms outside of the brackets. Taking the variation inside the brackets and simplifying yields
%\beq
%	\delta^2_{d_{jk}} {\cal E} & = & \frac{ m_k}{I_H^2} \left\{ 4 m_k I_H \left( m_j d_{jk} + m_i d_{ki} \right) \left[  \frac{m_j}{d_{jk}^2}  + m_i  \frac{1}{d_{ki}^2} \right] \right. \nonumber \\
%	& & \left. - 2 I_H^2 \left[ \frac{m_j}{d_{jk}^3} + \frac{m_i}{d_{ki}^3} \right] \right\}
%\eeq
\beq
	\delta^2_{d_{jk}} {\cal E} & = & 2 m_k \left\{ \frac{2 m_k}{ I_H } \left( m_j d_{jk} + m_i d_{ki} \right) \left[  \frac{m_j}{d_{jk}^2}  +   \frac{m_i}{d_{ki}^2} \right]  - \left[ \frac{m_j}{d_{jk}^3} + \frac{m_i}{d_{ki}^3} \right] \right\} \delta d_{jk} ^2
\eeq
First consider the case when $d \sim d_{jk} \sim d_{ki} \gg 1$. The second variation then reduces to
\beq
	\delta^2_{d_{jk}} {\cal E} & = & 2 m_k \frac{m_j+m_i}{d^3}  \delta d_{jk} ^2 
\eeq
which is always positive. Thus, all EA$ij$-$k$ configurations with large enough distances are stable. 

Conversely, Corrolary \ref{corrolary:X} shows that the EA configurations are always unstable when they approach and touch the ER configurations. From this, consider the stability of the EA configurations as a function of separation. At their minimum separation, when terminating the ER configurations, they begin as unstable. As the distance is increased they eventually become stable, indicating that ${\cal E}_{d_{jk}d_{jk}}$ evaluated at the EA configuration must cross through zero at some specific equilibrium configuration, indicating the point where the $H$-Bifurcation occurs. 

\paragraph{Bifurcation:}
Studying this aspect of the situation can provide qualitative insight into how the EA configurations bifurcate into existence and evolve as $H$ is increased. Consider Eqn.\ \ref{eq:H_EA}, which must be satisfied for an EA relative equilibrium configuration. As all of the terms on the right hand side are positive and bounded from below, there is an absolute minimum value such that if $H$ is below this value the equality cannot be satisfied and the EA relative configuration does not exist. At this value of angular momentum there will be a bifurcation from no relative equilibria to two relative equilibria, corresponding to the point identified above where ${\cal E}_{d_{jk}d_{jk}} = 0$. As $H$ increases further one branch of the EA relative equilibria must migrate towards the ER configuration and the other to larger distances, due to the uniqueness of this family. The branch that migrates to the larger separation will be stable while the branch that migrates to the contact configuration must be unstable, from Lemma \cite{theorem:X}. A similar bifurcation will occur for the configuration that intersects with the Euler Orbital family. 

\subsection{No Active Constraint}

Finally consider the case where none of the constraints are active. Then the three conditions must all be identically zero. Due to the structure of the problem, it is well known that there are only 5 relative equilibria to this problem \cite{wintner}. These are divided into the Lagrange solutions, which lie at the vertices of an equilateral triangle \cite{lagrange}, and the Euler solutions, which lie in a single line and are appropriately spaced \cite{euler}. 

\subsubsection{Lagrange Solutions}

For the Lagrange solutions, set $d = d_{12} = d_{23} = d_{31}$. From Eqn.\ \ref{eq:d31} and note that this requires $\cos\theta_{31} = 1/2$, meaning that $\theta_{31} = \pm 60^\circ$. Then the condition can be uniformly satisfied by choosing the distance $d$ such that $H^2 = \frac{I_H(d)^2}{d^3}$. Note that for all $d> \max(r_i+r_j) = r_1+r_2$ (given the assumed ordering) such a solution will always exist.  However, for a given $H^2$ a solution to the non-contact case may not always exist. Indeed, since for this case $I_H = (m_1m_2 + m_2m_3 + m_3m_1) d^2 + I_S$, $H$ has a minimum value that is greater than zero, and thus will not exist for all values of angular momentum. 

The point where the Lagrange Orbital configuration comes into existence can be explicitly probed. In general there are two possibilities. One is that it appears as a two branch family once the angular momentum goes above its minimum value. Then one branch will migrate inwards with increasing angular momentum and terminate by intersection with the IS12-3 family. Otherwise, if the minimum angular momentum point arises at a mutual distance less than $r_1+r_2$, then the inner IS12-3 family will transition directly into the LO family, as it is known that the equal mass case has this sort of a bifurcation \cite{scheeres_minE}, it is relevant to test for when this will occur. To do this just compute $\partial H/\partial d$ and solve for the zero to find 
\beq
	d^2 & = & \frac{3 I_S}{m_1 m_2 + m_2 m_3 + m_3 m_1}
\eeq
Which bifurcation structure ensues can be found by finding where this solution is greater or less than $(1-r_3)$. Figure \ref{fig:LOH} plots the region where the double branch occurs and where the single branch occurs. Figure \ref{fig:bif4} shows the two different bifurcation pathways. 

\begin{figure}[h!]
\centering
\includegraphics[scale=0.25]{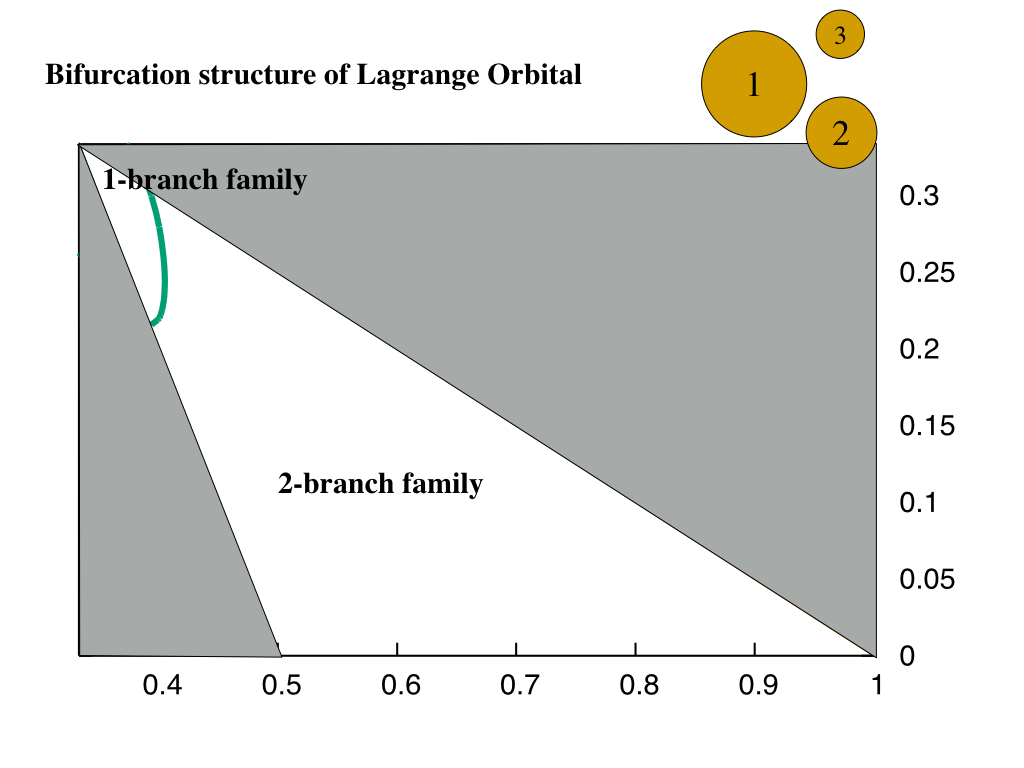}
\caption{Transition line between the two different bifurcation modes of the Lagrange Orbital configuration. The two different patterns of bifurcation are shown in Fig.\ \ref{fig:bif4}. }
\label{fig:LOH}
\end{figure}

While it is well known that the classical 3-body problem is spectrally stable when the Routh Criterion is satisfied, it should be noted that the stability considered in this paper, energetic stability, is a stronger type of stability. An observation by Moeckel \cite{moeckel_central} shows that central configurations in the point-mass $N$-body problem never have a positive definite second variation of their energy, and thus it can be suspected that the same holds true for the Lagrange Orbital configuration in the Full body. 
To test this, take the second order variation of $\mathcal{E}$, evaluated at the equilibrium, and determine if the resulting matrix of values is positive definite. Here it is simpler to take the 3 distances $d_{12}$, $d_{23}$ and $d_{31}$ as the degrees of freedom, with the general form, starting from Eqn.\ \ref{eq:dsym}, substituting the equilibrium condition and simplifying
\beq
	\frac{\partial^2\mathcal{E}}{\partial d_{ij}^2} & = & \frac{m_i m_j}{d I_H} \left[ \left(m_i m_j - 3 m_k(m_i+m_j)\right) d^2 - 3 I_S\right] \\
	\frac{\partial^2\mathcal{E}}{\partial d_{ij}\partial d_{jk}} & = & \frac{4 m_i m_j^2 m_k}{d I_H} 	
\eeq
where $d \ge 1-r_3$ for the specific case of interest. 

For the full Hessian of $\mathcal{E}$, $[\partial^2\mathcal{E}/\partial d_{ij} d_{jk} ]$, to be positive definite utilize Sylvester's Theorem again, which states that a necessary and sufficient condition is that all of the principal minors of the Hessian matrix be positive. Thus, a necessary condition for being positive definite is that the diagonals all be positive. Should any of these be negative, then the matrix is not positive definite and hence the relative equilibrium configuration is not energetically stable. Consider the entry for $\frac{\partial^2\mathcal{E}}{\partial d_{23}^2}$. The controlling condition for stability is then that $\left[ m_2 m_3 - 3 m_1(m_2+m_3)\right] d^2 - 3 I_S$ be positive. 
However, it is easy to show that the term $\left[ m_2 m_3 - 3 m_1(m_2+m_3)\right] < 0$, showing that the Lagrange Orbital configurations are always energetically unstable, consistent with Moeckel's result. First, restate the negative condition as $3 m_1 (m_2 + m_3) > m_2 m_3$, then note that $m_2 + m_3 > m_3$ and $m_1 > m_2$, establishing the inequality unequivocally. 

\subsubsection{Euler Solutions}

For the Euler conditions consider $d_{ij} \ge r_i+r_j$, $d_{jk} \ge r_j+r_k$, and $\theta_{ki} = \pi$. This case also has $d_{ki} = d_{ij} + d_{jk}$. Both Eqns.\ \ref{eq:3bpij} and \ref{eq:3bpjk} must equal zero in this case, yielding the two conditions.
\beq
	0 & = & m_i m_j \left[ - \frac{H^2}{I_H^2} + \frac{1}{d_{ij}^3} \right] d_{ij} + m_i m_k \left[ - \frac{H^2}{I_H^2} + \frac{1}{d_{ki}^3} \right] d_{ki}   \\
	0 & = & m_k m_j \left[ - \frac{H^2}{I_H^2} + \frac{1}{d_{jk}^3} \right] d_{jk} + m_k m_i \left[ - \frac{H^2}{I_H^2} + \frac{1}{d_{ki}^3} \right]  d_{ki} 
\eeq
First, there is a more fundamental equality within these results
\beq
	m_i m_j \left[ - \frac{H^2}{I_H^2} + \frac{1}{d_{ij}^3} \right] d_{ij} & = m_k m_i \left[ \frac{H^2}{I_H^2} - \frac{1}{d_{ki}^3} \right]  d_{ki} = & m_k m_j \left[ - \frac{H^2}{I_H^2} + \frac{1}{d_{jk}^3} \right] d_{jk} 
\eeq
By inspection, with the knowledge that $d_{ki} \ge d_{jk}, d_{ij}$, note that 
\beq
	\frac{1}{d_{ki}^3} & \le \frac{H^2}{I_H^2} \le & \frac{1}{\max{ (d_{jk}, d_{ij})}^3}
\eeq
Alternately, this ratio can also be solved for the quantity $(H/I_H)^2$ to find
\beq
	\frac{H^2}{I_H^2}  & = & \frac{1}{m_j d_{ij} + m_k d_{ki}} \left[ \frac{m_j}{d_{ij}^2} + \frac{m_k}{d_{ki}^2} \right] \\
	\frac{H^2}{I_H^2}  & = & \frac{1}{m_j d_{jk} + m_i d_{ki}} \left[ \frac{m_j}{d_{jk}^2} + \frac{m_i}{d_{ki}^2} \right] 
\eeq
which is the condition used to analyze how the ER configurations fissioned. Indeed, at the transition lines on Figs.\ \ref{fig:EA123} and \ref{fig:EA312} the resting configuration is in fact a central configuration, meaning that the relative attractions between the bodies will be balanced so long as their relative distances are preserved. 

This can be generalized to identify the possible bifurcation pattern in the EO configurations. Assume, say, that bodies $i$ and $j$ are in contact and that as body $k$ is moved to a larger distance it reaches the point where the equality between the above conditions occurs, meaning that bodies $i$ and $j$ are now in a relative equilibrium condition and the entire system satisfies a central configuration conditions \cite{wintner}. At this point the relative distances between these bodies can be uniformly scaled with the ratio $H/I_H$ following along. The change in angular momentum with this scaling is not uniform, however, due to the $3I_S$ term in $I_H$. Of specific interest regarding the pattern of bifurcation is whether the angular momentum decreases or increases with this change in relative distance. In the following it can be shown that both conditions can occur in general. 

Define the distance between bodies $k$ and $i$ where the EO conditions are satisfied (assuming $d_{ij} = 1-r_k$) as $d_{ki}^*$, and thus $d_{jk}^* = d_{ki}^* - (1-r_k)$, and define the ratio $H/I_H = \Omega^*$ at this point. Then, for increasing the distance the relative equilibria will all scale uniformly, meaning that 
\beq
	\frac{H^2}{I_H^2} & = & \frac{\Omega^{*2}}{d^3}
\eeq
where $d \ge 1$ and $d_{ij} = d(1-r_k)$, $d_{jk} = d d^*_{jk}$ and $d_{ki} = d d^*_{ki}$. With this structure, it is possible to compute the gradient 
\beq
	\frac{\partial H}{\partial d} & = & \frac{\Omega^{*2}}{2 d^{5/2}} \left[ \left( m_i m_j (1-r_k)^2 + m_j m_k (d_{ki}^{*}-(1-r_k))^2 + m_k m_i d_{ki}^{*2}\right) d^2 - 3 I_S\right] 
\eeq
How the bifurcation occurs can be tested by plotting the level sets from $d_{ki}^* = (1+r_j)$ to large values. For the 1, 2, 3 and 2, 1, 3 orderings the gradient is positive towards the apex of the triangular region and can take on negative values near the base. Thus, the appearance of the EO orbits occur as a transition closer to the equal mass condition and as a bifurcation followed by a termination away from there. Precise limits could be computed, but would require root solving algorithms.

Finally, consider the stability of the Euler Orbital solutions. These are again suspected to be energetically unstable due to the instability of the point mass cases, however this should be checked given the changes in the current approach. First, note that the second order variation in $\theta_{ki}$ is uncoupled from the variations in distance, and evaluated at the equilibrium yields
\beq
	\delta^2_{\theta_{ki}} {\cal E} & = & - m_k m_i \left[ - \frac{H^2}{I_H^2} + \frac{1}{d_{ki}^3} \right] d_{ij} d_{jk} (\delta \theta_{ki})^2  
\eeq
The quantity in the brackets is negative, as established above, and hence the angle variation is stable. For the distance variations the full $2\times 2$ Hessian matrix must be evaluated, however one can again just check the necessary conditions that the diagonals must all be positive. Taking the second order variations of both conditions from the diagonal of the Hessian matrix shows that both of the following conditions must be positive for stability
\beq
	m_i m_j - 3 (m_i+m_j)m_k & > & 0 \\
	m_j m_k - 3 (m_j+m_k)m_i & > & 0 
\eeq
For the ordering $m_3 \le m_2 \le m_1$ it can be shown that for all combinations of $i$, $j$ and $k$ that at least one of these conditions will be violated, and hence the EO configurations are always unstable. 

\end{proof}

\section{Summary}

To finish, the results are presented in light of the main theorem. First, consider the total number of relative equilibria found. For the no contact case recall the classical result of 5 distinct relative equilibria. When one contact is active the EA and IS relative equilibria were identified, which have 6 unique components each, raising the count to 17. For the two contact cases there are 3 ER and 6 TR configurations, resulting in a total of 26. Finally, for three contact cases there are the 2 LR configurations, leading to the total of 28. 

Now consider the bifurcation patterns, which are focused on the transitions between the different contact cases, and the identification of when the $H$-Bifurcations can occur. The details of the transitions will be outlined, although a few observations can be given first. With regard to stability the system starts with only two stable LR configurations at low values of $H$ and eventually has six stable EA configurations for arbitrarily large values of $H$. Between these limits the number of stable configurations can vary, and to establish the precise sequence and number would require a more detailed investigation for a specific set of sizes. It is noted, however, that there always exist at least one stable configuration. 

The bifurcation pattern seen in Fig.\ \ref{fig:bif2} is described first. 
The LR configurations all end at a Termination Fission condition with the TR132 configurations. These configurations either arise from an $H$-Bifurcation (in a limited region of the parameter space) or more commonly emerge as a symmetric bifurcation as the unstable ER132 configuration stabilizes. Under increasing $H$ the ER132 configuration either ends with a Termination Fission with the unstable component of the EA13-2 configuration, or for a limited range of parameters ends with a Transition Fission into the EA13-2 configuration. This second occurrence is of great interest as it is the only occasion in which the Spherical Full 3-Body Problem will fission into a stable configuration. The EA13-2 configuration itself usually arises as an $H$-Bifurcation, with its unstable branch terminating as mentioned above and its stable branch existing for all $H$ with an increasing distance proportional to $H^2 $. The only exception is when it arises as a Transition Fission, as described above. The EA23-1 configuration has an evolution that is completely isolated from the rest of this chart. It arises through an $H$-Bifurcation, with its unstable component either having a Termination Fission with the EO132 configuration or a Transition Fission into an EO132 configuration. Its stable component continues for all larger values of $H$ with a similar asymptotic form as the EA13-2 configuration. The EO132 configuration can either arise as an $H$-Bifurcation or through a Transition Fission, however the EO components are always unstable. 
It is important to note that at each stage of the system evolution with $H$, that there is at least one stable configuration, providing proof of that aspect of the theorem. 

Now consider the bifurcation patterns for the ER123 and ER213 pathways, shown in Figs.\ \ref{fig:bif1} and \ref{fig:bif3}. These are similar, and distinct from the ER132 pathway. Each of these start out in unstable ER configurations and both stabilize by a symmetric bifurcation with the TR123 and TR213 configurations, respectively. The TR123 and TR213 configurations end with a Termination Fission with an IS23-1 and IS13-2 configuration, respectively. The IS configurations arise through an $H$-Bifurcation with the inner configuration ending with the Termination Fission mentioned above and the outer configuration extending for all $H$, ultimately with their size on the order of $H^2$, although the IS configurations are always unstable. The stable ER123 and ER213 configurations end with a Termination Fission into an EA12-3 or EA32-1 configuration for the ER123 case or an EA21-3 or EA31-2 configuration for the ER213 case. Limits where these transitions occur have been delineated in Figs.\ \ref{fig:EA123} and \ref{fig:EA312}. Note that at the transition between these fission pattern the ER123 and ER213 configurations are in a central configuration, a situation that does not happen for the ER132 configuration. When the EA inner configurations do not end with a Termination Fission with an ER configuration, they end with a Termination Fission or Transition Fission with the corresponding EO configuration. The outer EA configurations are all stable and have the same asymptotic structure for large $H$. The EO123 and EO213 configurations either arise as an $H$-Bifurcation or as a Transition Fission, with the details of these boundaries left for future investigation. 

Finally consider the sequence involving the LO configuration, represented in Fig.\ \ref{fig:bif4}. This sequence is the least complex, with the IS12-3 configurations arising from an $H$-Bifurcation. The inner component ends with a Termination Fission or Transition Fission with the LO configuration. The LO configuration, in turn, either arises as an $H$-Bifurcation, with the inner component ending with a Termination Fission, or as a Transition Fission. All configurations in these sequences are unstable.

\section*{Appendix}

%\begin{lemma}
%\label{lemma:X}
%Assume a resting configuration exists in 1-D, i.e., $\left. \delta_{d}{\cal E}\right|_{d=0} > 0$, and is defined such that $\delta d \ge 0$. Then, if the resting condition is intersected by a relative equilibrium condition defined at a distance $d^* \rightarrow 0^+$, i.e., $\left. \delta_{d}{\cal E}\right|_{d^*} = 0$, then $\left. \delta^2_{d}{\cal E}\right|_{d^*} < 0$ and the relative equilibria at $d^*$ is Energetically Unstable. 
%\end{lemma}
%
%\begin{proof}
%Expand the minimum energy function about the equilibrium point $0 < d^* \ll 1$ to find ${\cal E}_d(d^* + \Delta d) = {\cal E}^*_{dd} \Delta d + \ldots$, as the first derivative is zero. Then substituting $\Delta d = - d^*$ and evaluating the first variation of energy we find $\left. \delta_d{\cal E}\right|_{d=0} = - {\cal E}^*_{dd} d^* \delta d > 0$. But for an allowable variation we have $\delta d \ge 0$, and thus ${\cal E}^*_{dd} < 0$ and the equilibrium point at $d^*$ is Energetically Unstable by Lemma \ref{lemma:3}. 
%\end{proof}

\subsection*{Partial Derivatives}

It is useful to state the relevant partial derivatives of the amended potential and its constituent terms, as a function of the distances and angles. In the following use the convention that the distances are denoted with indices $ij$ and $jk$ and the angle with indices $ki$. 

If the third degree of freedom is the angle $\theta_{ki}$ then 
\beq
	\frac{\partial {\cal E}}{\partial d_{ij}} & = & - \frac{H^2}{2 I_H^2} \frac{\partial I_H}{\partial d_{ij}} + \frac{\partial \mathcal{U} }{\partial d_{ij}} \\
%	\frac{\partial {\cal E}}{\partial d_{jk}} & = & - \frac{H^2}{2 I_H^2} \frac{\partial I_H}{\partial d_{jk}} + \frac{\partial \mathcal{U} }{\partial d_{jk}} \\
	\frac{\partial {\cal E}}{\partial \theta_{ki}} & = & - \frac{H^2}{2 I_H^2} \frac{\partial I_H}{\partial \theta_{ki}} + \frac{\partial \mathcal{U} }{\partial \theta_{ki}} 
\eeq
where
\beq
	\frac{\partial I_H}{\partial d_{ij}} & = & 2 m_i m_j d_{ij} + 2 m_i m_k \left( d_{ij} - d_{jk} \cos\theta_{ki} \right) \\
%	\frac{\partial I_H}{\partial d_{jk}} & = & 2 m_j m_k d_{jk} + 2 m_i m_k \left( d_{jk} - d_{ij} \cos\theta_{ki} \right) \\
	\frac{\partial I_H}{\partial \theta_{ki}} & = & 2 m_i m_k d_{ij} d_{jk} \sin\theta_{ki} 
\eeq
Similarly
\beq
	\frac{\partial \mathcal{U} }{\partial d_{ij}} & = & m_i m_j \frac{1}{d_{ij}^2} + m_i m_k \frac{\left( d_{ij} - d_{jk} \cos\theta_{ki} \right)}{d_{ki}^3} \\
%	\frac{\partial \mathcal{U} }{\partial d_{ij}} & = & m_j m_k \frac{1}{d_{jk}^2} + m_i m_k \frac{\left( d_{jk} - d_{ij} \cos\theta_{ki} \right)}{d_{ki}^3} \\
	\frac{\partial \mathcal{U} }{\partial \theta_{ki}} & = & m_i m_k \frac{ d_{ij} d_{jk} \sin\theta_{ki} }{d_{ij}^3}
\eeq

If the third degree of freedom is the distance $d_{ki}$, and if not at a limiting constraint, then 
\beq
	\frac{\partial {\cal E}}{\partial d_{ij}} & = & - \frac{H^2}{2 I_H^2} \frac{\partial I_H}{\partial d_{ij}} + \frac{\partial \mathcal{U} }{\partial d_{ij}} 
\eeq
where
\beq
	\frac{\partial I_H}{\partial d_{ij}} & = & 2 m_i m_j d_{ij} 
\eeq
and
\beq
	\frac{\partial \mathcal{U} }{\partial d_{ij}} & = & m_i m_j \frac{1}{d_{ij}^2} 
\eeq

\subsection*{Equilibrium Conditions}

For a relative equilibrium there are two different possibilities. Either $\delta{\cal E}=0$ or $\delta{\cal E} > 0$. 
For either, the relevant statement of the variations is given in the following for the two different formulations of the third degree of freedom.

If the third degree of freedom is the angle $\theta_{ki}$ then the full set of variations are
\beq
	\delta_{d_{ij}} {\cal E} & = & m_i \left\{ m_j \left[ - \frac{H^2}{I_H^2} + \frac{1}{d_{ij}^3} \right] d_{ij} \right. \nonumber \\
	& & \left. + m_k \left[ - \frac{H^2}{I_H^2} + \frac{1}{d_{ki}^3} \right] \left[ d_{ij} - d_{jk}\cos\theta_{ki}\right] \right\} \delta d_{ij} \label{eq:3bpij} \\
	\delta_{d_{jk}} {\cal E} & = & m_k \left\{ m_j \left[ - \frac{H^2}{I_H^2} + \frac{1}{d_{jk}^3} \right] d_{jk} \right. \nonumber \\
	& & + \left. m_i \left[ - \frac{H^2}{I_H^2} + \frac{1}{d_{ki}^3} \right] \left[ d_{jk} - d_{ij}\cos\theta_{ki}\right] \right\} \delta d_{jk} \label{eq:3bpjk}
\eeq
%\beq
%	\delta_{d_{23}} {\cal E} & = & m_2 m_3 d_{23} \left[ - \frac{H^2}{I_H^2} + \frac{1}{d_{23}^3} \right] \delta{d_{23}} \nonumber \\
%	& & + m_3 m_1 \left[ - \frac{H^2}{I_H^2} + \frac{1}{d_{31}^3} \right] \left[ d_{23} - d_{12}\cos\theta_{31}\right] \delta d_{23}  \label{eq:3bp2}
%\eeq
\beq
	\delta_{\theta_{ki}} {\cal E} & = & m_k m_i \left[ - \frac{H^2}{I_H^2} + \frac{1}{d_{ki}^3} \right] d_{ij} d_{jk} \sin\theta_{ki} \delta \theta_{ki}  \label{eq:3bpki}
\eeq

If the third degree of freedom is the distance $d_{ki}$ and the system is not at a constraint limit (i.e., $d_{ki} \ne |d_{ij} \pm d_{jk}|$), then the full set of variations are
\beq
	\delta_{d_{ij}} {\cal E} & = & m_i m_j \left[ - \frac{H^2}{I_H^2} + \frac{1}{d_{ij}^3} \right] d_{ij} \delta d_{ij}  \label{eq:dsym} \\
	\delta_{d_{jk}} {\cal E} & = & m_j m_k \left[ - \frac{H^2}{I_H^2} + \frac{1}{d_{jk}^3} \right] d_{jk} \delta d_{jk}  \\
	\delta_{d_{ki}} {\cal E} & = & m_k m_i \left[ - \frac{H^2}{I_H^2} + \frac{1}{d_{ki}^3} \right] d_{ki} \delta d_{ki}  
\eeq

These expressions are used to develop the necessary and sufficient conditions for a configuration to be a relative equilibrium. If any two components are in contact, then the condition for the degree of freedom that is blocked should be $\delta{\cal E} > 0$ for all allowed variations, otherwise the condition should be $\delta{\cal E} = 0$. 

\subsection*{Properties of the Function $F(\mu,x)$}
\begin{lemma}
The function
\beq
	F(\mu,x) = \frac{1+\frac{\mu}{x^2}}{1+\mu x}
\eeq
is monotonically decreasing in $x$ and is convex in $x$ over the interval $x\in ( 0, \infty)$.
\end{lemma}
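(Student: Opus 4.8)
The plan is to derive both properties at once by passing to the logarithm. Since $\mu>0$ and $x>0$ force $F(\mu,x)>0$ on the whole interval, I may work with $\ln F(\mu,x)=\ln\!\big(1+\mu x^{-2}\big)-\ln\!\big(1+\mu x\big)$, which replaces the quotient by a difference so that the two factors can be analyzed independently. I would then show that $\ln F$ is both decreasing and convex in $x$. Monotonic decrease of $F$ follows because $F=\exp(\ln F)$ with $\exp$ increasing, and convexity of $F$ follows because $\exp$ is convex and nondecreasing, so its composition with the convex function $\ln F$ is again convex.

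For monotonicity I would differentiate once,
\[
(\ln F)' \;=\; \frac{-2\mu}{x^{3}+\mu x}\;-\;\frac{\mu}{1+\mu x},
\]
and observe that for $\mu,x>0$ every term on the right is strictly negative, so $(\ln F)'<0$ and hence $F'<0$ throughout $(0,\infty)$.

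For convexity I would differentiate twice and treat the two logarithmic terms separately. The term $-\ln(1+\mu x)$ is immediately convex, since $\tfrac{d^{2}}{dx^{2}}\big[-\ln(1+\mu x)\big]=\mu^{2}/(1+\mu x)^{2}>0$ (equivalently, the log of an increasing affine function is concave, and I negate it). The term $\ln(1+\mu x^{-2})$ requires an explicit computation; carrying it out and simplifying gives $\tfrac{d^{2}}{dx^{2}}\ln(1+\mu x^{-2})=2\mu(3x^{2}+\mu)\big/\big(x^{2}(x^{2}+\mu)^{2}\big)$, which is strictly positive for $\mu,x>0$. Adding the two nonnegative second derivatives yields $(\ln F)''>0$, so $\ln F$ is convex and therefore $F$ is convex.

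I expect the only real obstacle to be the convexity of the first logarithmic term. It is tempting to split $\ln(1+\mu x^{-2})=\ln(x^{2}+\mu)-2\ln x$, but $\ln(x^{2}+\mu)$ is \emph{not} convex on all of $(0,\infty)$ (its second derivative changes sign at $x^{2}=\mu$), so one cannot argue termwise and must instead combine the pieces, where the $2/x^{2}$ coming from $-2\ln x$ exactly dominates the negative contribution and leaves the manifestly positive numerator $2\mu(3x^{2}+\mu)$. A fully robust alternative, should the logarithmic route seem less transparent, is to differentiate $F$ directly: the numerators of $F'$ and $F''$ reduce to $-\big(\mu+3\mu^{2}x^{-2}+2\mu x^{-3}\big)$ and $2\mu^{2}+12\mu^{3}x^{-2}+16\mu^{2}x^{-3}+6\mu x^{-4}$ respectively (over the positive denominators $(1+\mu x)^{2}$ and $(1+\mu x)^{3}$), both of definite sign for $\mu,x>0$. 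The entire content of the lemma is that, after expansion, no sign cancellations occur.
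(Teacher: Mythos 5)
Your proposal is correct, and it takes a genuinely different route from the paper. The paper simply differentiates $F$ twice in $x$ and leaves the result as a sum of factored terms: every term of $F'$ is manifestly negative and every term of $F''$ is manifestly positive, so both claims follow by inspection with no cancellation to track. You instead prove the stronger statement that $F$ is \emph{log-convex}: writing $\ln F = \ln(1+\mu x^{-2}) - \ln(1+\mu x)$ decouples numerator and denominator, the term $-\ln(1+\mu x)$ is immediately decreasing and convex, and your computation $\tfrac{d^2}{dx^2}\ln\bigl(1+\mu x^{-2}\bigr) = 2\mu(3x^{2}+\mu)\big/\bigl(x^{2}(x^{2}+\mu)^{2}\bigr) > 0$ checks out, as does your warning that the naive split $\ln(x^{2}+\mu)-2\ln x$ is not termwise convex and the pieces must be recombined. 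Passing back from $\ln F$ to $F=\exp(\ln F)$ is legitimate for both properties, since $\exp$ is increasing (giving monotonicity) and convex and nondecreasing (giving convexity of the composition). What each approach buys: the paper's is shorter and entirely mechanical, needing nothing beyond the product rule and a sign check; yours establishes log-convexity, which is strictly stronger than convexity and better behaved under products of such functions, at the cost of invoking the composition lemma. Your fallback direct computation --- numerator $-\bigl(\mu+3\mu^{2}x^{-2}+2\mu x^{-3}\bigr)$ over $(1+\mu x)^{2}$ for $F'$ and numerator $2\mu^{2}+12\mu^{3}x^{-2}+16\mu^{2}x^{-3}+6\mu x^{-4}$ over $(1+\mu x)^{3}$ for $F''$ --- is also correct, and is essentially the paper's proof with the factored terms placed over common denominators.
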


\begin{proof}
Consider the first derivative of the function with respect to $x$:
\beq
	F'(\mu,x) & = & \frac{- 2 \mu}{x^3} \frac{1}{1+\mu x} - \mu \frac{1+\frac{\mu}{x^2}}{(1+\mu x)^2}
\eeq
By inspection it can be seen that all terms are negative and non-zero, and thus the function is monotonically decreasing in $x$.

Taking the second derivative with respect to $x$:
\beq
	F''(\mu,x) & = & \frac{6 \mu}{x^4} \frac{1}{1+\mu x} + \frac{4 \mu^2}{x^3} \frac{1}{(1+\mu x)^2} 
	+ 2 \mu^2 \frac{1+\frac{\mu}{x^2}}{(1+\mu x)^3}
\eeq
By inspection again see that all terms are positive and non-zero and is thus convex. 
\end{proof}

\section*{Acknowledgements}

The author acknowledges support from NASA grant NNX14AL16G from the Near Earth Objects Observation programs. 

\newpage \bibliographystyle{plain}
\bibliography{../../../bibliographies/biblio_article,../../../bibliographies/biblio_books,../../../bibliographies/biblio_misc}

\end{document}